\DeclareMathOperator{\arccot}{arccot}
\spnewtheorem*{pmp}{Pontryagin's Maximum Principle}{\bf}{\it}
\numberwithin{equation}{section}
\begin{document}

\title{Extreme properties of curves with bounded curvature on a sphere
}


\author{Alexander Borisenko\and Kostiantyn Drach}


\institute{A. Borisenko \at
	National Academy of Sciences of Ukraine,\\
	\email{aborisenk@gmail.com}           
           \and
           K. Drach \at
	Geometry Section, V.N. Karazin Kharkiv National University, Svobody Sq. 4, 61022, Kharkiv, Ukraine\\
	Tel.: +38-057-7075492\\
	\email{drach@karazin.ua}
}

\date{Received: date / Accepted: date}

\dedication{In memory of our friend and colleague V.~I.~Diskant}

\maketitle

\begin{abstract}
We give a sharp lower bound on the area of the domain enclosed by an embedded curve lying on a two-dimensional sphere, provided that geodesic curvature of this curve is bounded from below. Furthermore, we prove some dual inequalities for convex curves whose curvatures are bounded from above. 
\keywords{$\lambda$-convex curves \and reverse isoperimetric inequality \and Pontryagin's Maximum Principle}

\subclass{MSC 53C40 \and MSC 49K30 \and MSC 53C21}
\end{abstract}

\section*{Introduction}

The classical isoperimetric property of a circle in the two-dimensional space of constant curvature equal to $c$ claims that among all simple closed curves of a fixed length the maximal area is enclosed only by a circle. This property can be reformulated in an equivalent way in the form of an isoperimetric inequality. For an arbitrary simple closed curve of the length $L$ that encloses the domain of the area $A$ the following inequality holds (see, for example,~\cite{BZ})   
\begin{equation}
\label{isoineq}
L^2 - 4\pi A + c A^2 \geqslant 0,
\end{equation}
and the equality is attained only by circles.

Inequality~(\ref{isoineq}) gives a sharp upper bound on the area of the domain bounded by a curve provided that its length is fixed. At the same time, there exist simple closed curves that bound domains whose areas are arbitrary close to zero.    

Therefore, in order to obtain some meaningful estimates from below of the bounded area we need to further restrict the class of curves under consideration. One of the natural ways to do this is to consider curves of bounded curvature. Such class appeared in a number of extremal problems (see, for example,~\cite{AB,BorDr1,BorDr2,HTr,H,M}, and also~\cite{PZh}). In particular, in~\cite{HTr} the authors gave a bound on the area of domains enclosed by closed embedded plane curves of the fixed lengths whose curvatures $k$ satisfy the inequality $|k| \leqslant 1$ and the lengths satisfy some additional restrictions. Another type of bounds can be obtained if we consider the class of curves of curvatures bounded from below. In~\cite{BorDr1} for closed embedded plane curves whose curvatures $k$ in the generalized sense (see the definition below) satisfy the inequality $k \geqslant \lambda$ for some constant number $\lambda$ it was proved the following

\begin{theopargself}
\begin{theorem}[\cite{BorDr1}]
\label{ec2dth1} 
Let $\gamma$ be a closed curve embedded in the Euclidean plane  $\mathbb E^2$. If the curvature $k$ of $\gamma$ is bounded from below by some positive constant $\lambda$, then for the length $L$ of $\gamma$ and the area $A$ of the domain enclosed by $\gamma$ the following inequality holds: 
\begin{equation}
\label{ec2dth1eq1}
A \geqslant \frac{L}{2\lambda} - \frac{1}{\lambda^2}\sin\frac{\lambda L}{2}.
\end{equation}
Moreover, the equality case holds only for a \textmd{lune}, that is a boundary curve of the intersection of two domains enclosed by circles of curvature equal to $\lambda$.
\end{theorem}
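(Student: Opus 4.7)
The strategy is to formulate the problem as an optimal control problem and apply Pontryagin's Maximum Principle (PMP) to classify the extremals.

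First, observe that the hypothesis $k\geqslant\lambda>0$ (in the generalized sense) forces $\gamma$ to bound a convex domain, with any corner points contributing only positive turning. By Blaschke's selection theorem, for each fixed perimeter $L$ the infimum of the enclosed area over the admissible class is attained; denote a minimizer by $\gamma_*$. Parametrize $\gamma_*$ by arclength $s\in[0,L]$, and take as state the position $(x,y)$ together with the tangent angle $\theta$, with dynamics
\begin{equation*}
\dot x=\cos\theta,\qquad\dot y=\sin\theta,\qquad\dot\theta=u,\qquad u\geqslant\lambda,
\end{equation*}
allowing positive atomic jumps $\Delta\theta>0$ at corner points. Closure imposes $x(L)=x(0)$, $y(L)=y(0)$ and $\theta(L)=\theta(0)+2\pi$, and by Green's formula the objective $A=\tfrac12\int_0^L(x\sin\theta-y\cos\theta)\,ds$ is minimized.

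Since the Hamiltonian associated to this problem is affine in $u$, PMP yields $u\equiv\lambda$ on each smooth piece on which the switching function (the multiplier $p_\theta$) is nonzero; a direct computation of $\dot p_\theta$ via the adjoint equations rules out the singular regime $p_\theta\equiv 0$ on any subinterval. Thus $\gamma_*$ consists of arcs of circles of radius $1/\lambda$ joined at corner points. The next step is to show that there are exactly two such arcs and two corners, arranged symmetrically, so that $\gamma_*$ is a symmetric lune. The total turning $2\pi$ is partitioned into arc angles and corner angles; a sliding-corner variation — moving a corner along $\gamma_*$ while preserving both $L$ and the constraint $k\geqslant\lambda$ — shows that any configuration with three or more corners admits an area-decreasing perturbation, and similarly that among two-corner configurations the symmetric one is the unique minimizer.

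For the resulting symmetric lune, each of the two arcs of radius $R=1/\lambda$ subtends the central half-angle $\alpha=\lambda L/4$, and the enclosed region is the union of two congruent circular segments of area $R^{2}\alpha-R^{2}\sin\alpha\cos\alpha$ each; hence
\begin{equation*}
A=2R^{2}\alpha-R^{2}\sin 2\alpha=\frac{L}{2\lambda}-\frac{1}{\lambda^{2}}\sin\frac{\lambda L}{2},
\end{equation*}
which simultaneously proves the inequality~(\ref{ec2dth1eq1}) and identifies its equality case. The principal obstacle in this plan is the combinatorial/variational reduction to the two-arc symmetric lune: PMP effortlessly identifies the local structure of any extremal as a chain of arcs of curvature exactly $\lambda$, but excluding extremals with additional corners, and pinning down the symmetry of the two-arc case, requires a careful global area comparison rather than a purely local first-order analysis.
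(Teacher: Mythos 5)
This theorem is imported from~\cite{BorDr1}; the present paper does not reprove it, and the closest thing to ``the paper's own proof'' is the argument for the spherical analogue (Theorem~\ref{mainthalt} via Proposition~\ref{mainthspecialcase}), which follows exactly the two-stage strategy you outline: Pontryagin's Maximum Principle to reduce to curves built from circular arcs of curvature exactly $\lambda$, then a global geometric comparison to single out the lune. Your outline is therefore on the right track, but it contains genuine gaps. First, your control formulation takes $u=k\in[\lambda,\infty)$, an unbounded control set, and you patch in corners as ``positive atomic jumps $\Delta\theta>0$''; this falls outside the standard PMP you invoke, where the control must be a bounded measurable function with values in a fixed compact segment and the maximality condition is taken over that segment. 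The paper's device --- parametrizing by the angle $\theta$ of the support direction and taking the \emph{radius} of curvature $R=1/k\in[0,1/\lambda]$ as the control --- is not cosmetic: it makes the control set compact and converts corners into genuine arcs of the trajectory on which $R=0$, whose contribution to the turning is recovered exactly (this is the content of the computation around the jump angles $\varphi_i$ in the proof of~(\ref{mainproofeq27})), so no impulsive controls are needed. Relatedly, your one-line dismissal of the singular regime (``a direct computation of $\dot p_\theta$ rules out $p_\theta\equiv 0$'') is too quick: on a singular arc the switching function and its first derivative vanish identically by definition, which yields only additional relations among the adjoint variables rather than a contradiction; excluding singular arcs here requires the second-order generalized Legendre--Clebsch (Kelley) condition, which is precisely what the paper computes for the spherical problem.

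Second, and more seriously, the step you yourself flag as ``the principal obstacle'' --- passing from an arbitrary chain of $\lambda$-arcs to the symmetric two-arc lune --- is left as an assertion about a ``sliding-corner variation,'' and this is where most of the work lies. The paper's proof of Proposition~\ref{mainthspecialcase} gives a concrete recipe you could adapt verbatim to the plane: (i) for centrally symmetric $\lambda$-convex polygons, apply Steiner's four-bar linkage deformation to a quadrilateral of vertices, using the unimodality of the triangle area as a function of the apex angle (Lemma~\ref{lemarea}) to remove two vertices at a time while preserving the length and $\lambda$-convexity and strictly decreasing the area, and conclude by Blaschke compactness; (ii) for a general polygon, cut along a diameter, reflect each half through the midpoint to produce two centrally symmetric polygons, compare each with a lune of the same length by step (i), and recombine using the strict convexity of the lune's area as a function of its length (Lemma~\ref{lemluneest}). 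Without an argument of this kind, or a fully worked-out version of your sliding variation, your proposal establishes only that a minimizer is a $\lambda$-convex polygon, not the inequality itself. The closing computation of the lune's area, and hence the explicit form of the right-hand side of~(\ref{ec2dth1eq1}), is correct.
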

\end{theopargself}

In the present paper we generalize Theorem~\ref{ec2dth1} for curves lying on a two-dimensional sphere.

\section{Preliminaries and the main result}

Let $\mathbb S^2 (k_1^2)$ be a standard two-dimensional sphere of Gaussian curvature equal to $k_1^2$ with $k_1 > 0$. In order to state the main result we need the following

\begin{definition}
A locally convex curve $\gamma \subset \mathbb S^2(k_1^2)$ is called \textit{$\lambda$-convex} with $\lambda \geqslant 0$ if for every point $P \in \gamma$ there exists a curve $\mu_P$ of constant geodesic curvature equal to $\lambda$ passing through $P$  in such a way, that in a neighborhood of $P$ the curve $\gamma$ lies from the convex side of $\mu_P$. 
\end{definition}

By the definition above, $0$-convex curves are just locally convex. If $\lambda>0$, then for every point on a curve there exists a locally supporting circle of curvature equal to $\lambda$.

We should also note that at  $C^k$-regular points of $\gamma$ with $k\geqslant 2$ the condition of being $\lambda$-convex is equivalent to the condition that at such points the geodesic curvature $\kappa_g$ of $\gamma$ satisfies the inequality $\kappa_g \geqslant \lambda$. Hence the class of $\lambda$-convex curves is a non-regular extension for the class containing smooth curves of geodesic curvature bounded from below by $\lambda$.

It is known that a convex curve is twice continuously differentiable almost everywhere and thus its geodesic curvature is almost everywhere well-defined. Therefore, a convex curve is $\lambda$-convex if and only if the inequality $\kappa_g \geqslant \lambda$ is satisfied at all points where geodesic curvature is defined. Even more, the set of a curve's \textit{vertexes}, that is points at which left and right semi-tangents do not coincide, is no-more-than countable.

We recall that $\gamma \subset \mathbb S^2(k_1^2)$ is a \textit{closed embedded curve} if it is homeomorphic to a circle and has no self-intersections. Embedded curves may not be smooth in general.

Note that a closed embedded $\lambda$-convex curve on a sphere is globally convex. Furthermore, if $\lambda > 0$, then at any point of this curve there exist a locally supporting circle, and thus the curve is globally strictly convex.

Observe also that a geodesic digon being a $0$-convex lune can enclose a domain with area arbitrary close to zero. Thus in order to get non-trivial estimates for the enclosed area, everywhere below we will assume $\lambda > 0$.

\begin{definition}
\textit{$\lambda$-convex polygon} is a closed embedded $\lambda$-convex curve composed of circular arcs of geodesic curvature equal to $\lambda$.
\end{definition}

\begin{definition}
A $\lambda$-convex polygon composed of two circular arcs we will call a \textit{$\lambda$-convex lune} or simply a \textit{lune}.
\end{definition}

We are now ready to formulate the main result of the paper that is summarized in the following theorem.

\begin{theorem}
\label{mainth}

Let $\gamma$ be a closed embedded $\lambda$-convex curve lying on a two-dimensional sphere $\mathbb S^2 (k_1^2)$ of Gaussian curvature equal to $k_1^2$. If $L(\gamma)$ is the length of $\gamma$ and $A(\gamma)$ is the area of the domain enclosed by $\gamma$, then
\begin{equation}
\label{maintheq1}
A(\gamma) \geqslant \frac{4}{k_1^2} \arctan\left(\frac{\lambda}{\sqrt{\lambda^2+k_1^2}}\tan\left(\frac{\sqrt{\lambda^2+k_1^2}}{4}L(\gamma)\right)\right)-\frac{\lambda}{k_1^2} L(\gamma).
\end{equation}
Moreover, the equality case holds only for $\lambda$-convex lunes. 
\end{theorem}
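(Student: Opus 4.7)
The plan is to reduce the inequality to an extremal problem among $\lambda$-convex polygons by approximation, translate it via Gauss-Bonnet into maximizing a total turning angle, and then solve that optimization using Pontryagin's Maximum Principle, before verifying the formula explicitly for lunes. First, given any closed embedded $\lambda$-convex curve $\gamma$, the $\lambda$-convexity property allows one to connect points of a dense partition $P_1,\ldots,P_n\in\gamma$ by arcs of geodesic curvature exactly $\lambda$ lying on the convex side of $\gamma$; these arcs form an inscribed $\lambda$-convex polygon whose length and enclosed area converge to those of $\gamma$ as the mesh tends to zero. Since the right-hand side of (\ref{maintheq1}) is continuous and monotonically increasing in $L$, it suffices to prove the inequality for all $\lambda$-convex polygons.

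For a $\lambda$-convex polygon $P$ of total arc length $L$ with exterior angles $\alpha_1,\ldots,\alpha_n$ at its vertices, the Gauss-Bonnet theorem on $\mathbb S^2(k_1^2)$ yields
$$\lambda L + \sum_{i=1}^n \alpha_i + k_1^2 A(P) = 2\pi.$$
Thus for fixed $L$, minimizing $A(P)$ is equivalent to maximizing the total exterior angle $\Sigma:=\sum_i\alpha_i$ over the admissible class of closed embedded $\lambda$-convex polygons of length $L$. The claim becomes that $\Sigma$ is maximized precisely by $2$-arc polygons, i.e., lunes. Granting this, a direct spherical-trigonometric computation for a lune---exploiting that circles of geodesic curvature $\lambda$ on $\mathbb S^2(k_1^2)$ have spherical radius $r$ satisfying $\tan(k_1 r)=k_1/\lambda$---identifies the corner angle of a lune of length $L$ as $\alpha(L)=\pi-2\arctan\bigl((\lambda/\sqrt{\lambda^2+k_1^2})\tan(\sqrt{\lambda^2+k_1^2}\,L/4)\bigr)$; plugging $\Sigma=2\alpha(L)$ back into the Gauss-Bonnet relation produces exactly (\ref{maintheq1}).

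The core step is showing that $\Sigma$ is indeed maximized by lunes. We formulate this as a variational problem with state a pair (point, unit tangent) on $\mathbb S^2(k_1^2)$, control alternating between arcs of curvature $\lambda$ and nonnegative instantaneous turns at vertices, cost functional $\Sigma$, and endpoint constraints enforcing closure in $SO(3)$. Pontryagin's Maximum Principle applied to this problem, combined with a local deformation analysis for polygons with $n\geq 3$ arcs, should show that any putative extremizer with more than two arcs admits a variation which preserves length and closure but strictly increases $\Sigma$ while driving some $\alpha_i$ or arc length to zero; iteration then reduces the number of arcs to $n=2$. The main obstacle is carrying out this PMP analysis on the sphere: the closing constraint lives in $SO(3)$ rather than in the plane's translation group, so the adjoint equations and admissible deformations are substantially more delicate than in the Euclidean setting of Theorem~\ref{ec2dth1}. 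One must also guard against non-embedded or degenerate candidate polygons competing for the supremum, and this careful analysis of the admissible region is what ultimately secures the equality case as being attained only by lunes.
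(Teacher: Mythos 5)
Your reduction to $\lambda$-convex polygons, the Gauss--Bonnet reformulation $\lambda L+\sum_i\alpha_i+k_1^2A=2\pi$ (so that minimizing $A$ at fixed $L$ means maximizing the total exterior angle), and the lune angle formula are all correct and consistent with~(\ref{maintheq1}). But the entire content of the theorem sits in the step you dispatch with ``Pontryagin's Maximum Principle \dots should show'': you never write down the control system on the unit tangent bundle, the adjoint equations, or the closure constraint in $SO(3)$, and you never exhibit the length- and closure-preserving variation that strictly increases $\sum_i\alpha_i$ for a polygon with $n\geqslant 3$ arcs. As written this is a restatement of the claim, not a proof; moreover your model mixes continuous arcs with impulsive turns at vertices, a hybrid setting to which the textbook form of PMP quoted in Section~\ref{mainres} does not directly apply. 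For comparison, the paper proves the polygonal case (Proposition~\ref{mainthspecialcase}) by elementary geometry rather than by PMP: for centrally symmetric polygons a Steiner four-bar linkage rotation of the geodesic quadrilateral $AB\bar A\bar B$ about one vertex (justified by Lemma~\ref{lemarea} on the monotonicity of the triangle area in the apex angle) removes a pair of vertices while preserving length and $\lambda$-convexity and strictly decreasing area; the general polygon is then reduced to the symmetric case by reflecting the two halves cut off by a diameter and invoking the strict convexity of the lune area function $\tilde A$ from Lemma~\ref{lemluneest}. Some argument of this concreteness is what is missing from your core step.

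Two further gaps. First, your reduction to polygons by inscribing $\lambda$-arcs requires checking that the inscribed object is an embedded $\lambda$-convex polygon and that its length and area converge; granting that, passing to the limit yields only the non-strict inequality, so the characterization of equality (``only lunes'') cannot be extracted from the approximation alone --- you would need a separate rigidity argument showing that every non-lune admits a strict improvement. The paper avoids this by a different division of labor: Blaschke selection guarantees a minimizer exists, PMP --- applied with the contact radius of curvature as phase variable and the radius of curvature as the control in problem~(\ref{optcontrsys}), with singular arcs excluded via the generalized Legendre--Clebsch condition --- forces the minimizer to be a $\lambda$-convex polygon, and the polygonal statement then settles both the inequality and its equality case at once. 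Second, even if your PMP analysis identified lune-type extremals, you would still need the existence of a maximizer of $\sum_i\alpha_i$ in your class (or an independent monotonicity scheme), since PMP is only a necessary condition; this is exactly the role Blaschke selection plays in the paper and it is absent from your outline.
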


\begin{remark}
Observe that when $k_1 \to 0$ the right side of inequality~(\ref{maintheq1}) tends to the right side of inequality~(\ref{ec2dth1eq1}).
\end{remark}

It is important to note that the inequality in Theorem~\ref{mainth} express the \textit{isoperimetric property} of $\lambda$-convex lunes. To make the statement above precise, we need to reformulate Theorem~\ref{mainth} in the following equivalent way.

\begin{theorem}
\label{mainthalt}

Let $\gamma$ be a closed embedded $\lambda$-convex curve lying on a two-dimensional sphere $\mathbb S^2 (k_1^2)$. If $\gamma_0 \subset \mathbb S^2(k_1^2)$ is a $\lambda$-convex lune such that $$L(\gamma) = L(\gamma_0),$$
then $$A(\gamma) \geqslant A(\gamma_0)$$ and the equality case holds if and only if $\gamma$ is a $\lambda$-convex lune.
\end{theorem}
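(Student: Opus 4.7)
The plan is a three-stage reduction: first, establish existence of an area-minimizer among closed embedded $\lambda$-convex curves of length $L$; next, reduce this minimizer to a $\lambda$-convex polygon; and finally, show that such an extremal polygon must have exactly two arcs, i.e., is a lune. For the existence step, every admissible curve is globally strictly convex (as noted in the preliminaries) and encloses a convex body on $\mathbb{S}^2(k_1^2)$ whose in-radius and circum-radius are controlled by $\lambda$, $k_1$, $L$. Applying Blaschke's selection principle to a minimizing sequence yields a Hausdorff-convergent subsequence; length and area are continuous under Hausdorff limits of spherical convex bodies, and the $\lambda$-convexity condition passes to the limit through the supporting circles of spherical radius $\frac{1}{k_1}\arctan(k_1/\lambda)$. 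This produces a minimizer $\gamma^*$.

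For the polygonal reduction, I would argue by contradiction. Suppose the generalized geodesic curvature of $\gamma^*$ exceeds $\lambda$ on an arc of positive measure. Then one can push this ``curvier-than-necessary'' piece slightly inward (allowed by the slack in the curvature bound), compensating on a flatter piece by a slight outward bulge so that total length is unchanged. A first-variation computation, for instance via the spherical support function, shows the net area change is strictly negative, contradicting minimality. Consequently $\kappa_g \equiv \lambda$ almost everywhere on $\gamma^*$, so $\gamma^*$ is a $\lambda$-convex polygon.

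For the lune step, I would invoke Gauss--Bonnet: for a $\lambda$-convex polygon with exterior angles $\theta_1,\dots,\theta_n$,
\[
k_1^2 A(\gamma) + \lambda L(\gamma) + \sum_{i=1}^n \theta_i = 2\pi.
\]
With $L$ fixed, minimizing $A$ is equivalent to maximizing $\sum_i \theta_i$. Spherical trigonometry expresses each angle in terms of the lengths of its two adjacent arcs, and a concavity analysis shows that coalescing two adjacent arcs into a single one (eliminating a vertex) strictly increases $\sum_i \theta_i$ whenever $n \ge 3$. Iterating, the extremum has exactly two arcs, i.e., a $\lambda$-convex lune.

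The main obstacle I expect is the first-variation step in the polygonal reduction: $\gamma^*$ need only be $C^1$ with one-sided generalized curvatures, and the $\lambda$-convexity constraint itself is one-sided, so admissible variations must be set up with care to keep the perturbed curves in the class and to produce a strict sign for the area change. A cleaner, unifying route — consistent with the custom \texttt{pmp} environment the paper sets up — is to recast the problem as an optimal control on $\mathbb{S}^2(k_1^2)$ with tangent direction as state, $\kappa_g \in [\lambda,\infty)$ as control, and area as the cost subject to an integral length constraint; Pontryagin's Maximum Principle should then force a bang-bang optimal control with exactly two switches, yielding both the polygonal reduction and the lune identification in a single stroke.
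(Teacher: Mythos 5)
Your overall architecture (Blaschke compactness, reduction of the minimizer to a $\lambda$-convex polygon, then polygon to lune) matches the paper's, and your closing suggestion to run the argument through Pontryagin's Maximum Principle is exactly how the paper performs the polygonal reduction: it takes $t=\theta$, the \emph{radius} of curvature $u=R\in[0,1/\lambda]$ as the control --- note that $\kappa_g\in[\lambda,\infty)$ would be an unbounded control set, incompatible with the statement of the maximum principle used here, and could not encode corner points, whereas $u=0$ does --- expresses length and area through the contact radius of curvature $g=\tfrac{1}{k_1}\tan(k_1 h)$, observes that the Pontryagin function is affine in $u$, and excludes singular arcs by the generalized Legendre--Clebsch (Kelley) condition. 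Up to that point your plan is sound, modulo the choice of control variable and the fact that your hand-made first variation would be delicate for precisely the reasons you name.

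The genuine gap is the final step, polygon $\to$ lune. Your Gauss--Bonnet reformulation ($k_1^2A+\lambda L+\sum_i\theta_i=2\pi$, so minimizing $A$ at fixed $L$ means maximizing $\sum_i\theta_i$) is correct, but the proposed mechanism fails on two counts. First, the exterior angle at a vertex of a $\lambda$-convex polygon is \emph{not} a function of the lengths of the two adjacent arcs: it equals the angle at the vertex between the geodesic radii to the two circle centers, hence is determined by the distance between adjacent centers and depends on the global configuration; the closing-up of the polygon is a nontrivial constraint coupling all the arcs. Second, replacing two adjacent arcs by a single arc of the same total length does not in general yield a closed curve, so ``coalescing'' is not an admissible operation. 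Nor can the maximum principle rescue this: it gives a bang-bang control but says nothing about the number of switches, so it cannot ``force exactly two switches.'' The paper closes this gap with a separate geometric argument (Proposition~\ref{mainthspecialcase}): for centrally symmetric polygons it applies a Steiner four-bar-linkage deformation --- rotating two arcs about a hinge vertex so as to open that vertex's interior angle to $\pi$, which preserves length and $\lambda$-convexity, strictly decreases area (via a monotonicity lemma for the area of a spherical triangle as a function of one angle), and removes a pair of vertices; the general case is then reduced to the symmetric one by reflecting the two halves of the polygon across the midpoint of a diameter and invoking strict convexity of the lune's area as a function of its length. Some working substitute for this entire block is required for your proof to go through.
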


We will prove the main result in the form of Theorem~\ref{mainthalt} and after that show its equivalence to Theorem~\ref{mainth}.
 
\begin{remark}
The isoperimetric property of $\lambda$-convex curves, similar to the one proved for the Euclidean plane and the one that will be proved for a sphere, also holds for a two-dimensional Lobachevsky space $\mathbb H^2 (-k_1^2)$ for any $\lambda > 0$. 
\end{remark}

\section{Proof of the main result}
\label{mainres}

The principal tool for proving the main result will be Pontryagin's Maximum Principle. Let us recall all necessary definitions and a statement for a special case of this principle adapted for our needs (for a general case see~\cite[\S 1.4]{MDmO}, or equivalent but ideologically different approach in~\cite[\S 5]{ArMIT}). 

\begin{definition}
\label{contrpros}
A pair of functions $(x(t), u(t))$, defined as $(x(t), u(t)) \colon [t_0,t_1] \to \mathbb R^2 \times \mathbb R$ on some fixed segment $[t_0,t_1] \subset \mathbb R$ later denoted as $\Delta$, is a \textit{controlled process} if \textit{phase variable} $x(t)$, which is a pair of coordinate functions  $(x_1(t),x_2(t))$, is absolutely continuous function on $\Delta$, a \textit{control} $u(t)$ is bounded measurable function on $\Delta$, and the pair $(x(t), u(t))$ satisfies a \textit{controlled system}
\begin{equation}
\label{pmpeq1}
\begin{aligned}
\dot x (t) &= f (x(t),u(t))  \\
u(t) & \in \Delta_u 
\end{aligned}
\,\, \text{ a.e. on }\Delta,
\end{equation}   
where $\Delta_u \subset \mathbb R$ is a fixed segment, the function $f \colon \mathbb R^2 \times \mathbb R \to \mathbb R^2$ and its derivatives $f_x$ are continuous with respect to all variables; by the dot sign, as usual, we denote a derivative with respect to the variable $t$.
\end{definition}

For controlled processes we pose a minimization problem for the following functional 
\begin{equation}
\label{pmpeq2}
\mathcal J(x(\cdot),u(\cdot)) = \int\limits_{t_0}^{t_1} F_0 (x(t),u(t)) dt \to min,
\end{equation}
with an integral constraint
\begin{equation}
\label{pmpeq3}
\int \limits_{t_0}^{t_1} F_1 (x(t),u(t)) dt = A
\end{equation}
and periodic boundary conditions 
\begin{equation}
\label{pmpeq4}
x(t_0) = x(t_1) = a,
\end{equation}
where the function $F_0 \colon \mathbb R^2 \times \mathbb R \to \mathbb R$ is continuously differentiable on its domain, the function $F_1 \colon \mathbb R^2 \times \mathbb R \to \mathbb R$ is continuous together with its derivatives $(F_1)_x$, and $A \in \mathbb R$, $a \in \mathbb R^2$ are some fixed constants.

A \textit{trajectory} ${\{(x(t), u(t)) \colon t \in \Delta\}}$ that corresponds the controlled process~(\ref{pmpeq1}) is  \textit{admissible} for problem~(\ref{pmpeq2}) if for this trajectory constraints~(\ref{pmpeq3}),~(\ref{pmpeq4}) are satisfied. An admissible trajectory is \textit{optimal} if it minimizes the value of the functional $\mathcal J$ among all admissible trajectories.

Let us introduce a \textit{Pontryagin's function}
$$
\mathcal H(x,u,p,\lambda_0,\lambda_1) = p \cdot f(x,u) + \lambda_1 F_1 (x,u) - \lambda_0 F_0 (x,u).
$$
We will say that an admissible trajectory ${\{(x(t), u(t)) \colon t \in \Delta\}}$ for problem~(\ref{pmpeq2}) satisfies the~\textit{Pontryagin's maximum principle} if there are real numbers $\lambda_0, \lambda_1$ and absolutely continuous on $\Delta$ function $p(t) \colon \Delta \to \mathbb R^2$ such that for them the following conditions are met: 

\begin{enumerate}
\item[(i)] 
$\lambda_0 \geqslant 0$ (nonnegativity condition);

\item[(ii)] 
$\lambda_0 + |\lambda_1| + |p(t)| \neq 0$ for all $t \in \Delta$  (non-triviality condition);

\item[(iii)] 
$\dot p(t) = - \mathcal H'_x(x(t),u(t),p(t),\lambda_0,\lambda_1)$ a.e. on $\Delta$ (adjoint system);

\item[(iv)] 
$\max\limits_{u \in \Delta_u} \mathcal H(x(t),u,p(t),\lambda_0,\lambda_1) = \mathcal H(x(t),u(t),p(t),\lambda_0,\lambda_1)$ for almost all $t \in \Delta$ (maximality condition).
\end{enumerate}

Finally, we have the following theorem.

\begin{pmp}
\label{pmp}
Let ${\{(\hat x(t), \hat u(t)) \colon t \in \Delta\}}$ be an optimal trajectory for problem~(\ref{pmpeq2}). Then it satisfies the Pontryagin's maximum principle.
\end{pmp}

Now we have the necessary tool to proceed with proving the main result of the paper.

In order to use Pontryagin's Maximum Principle we need to construct a controlled system~(\ref{pmpeq1}). For this purpose let us introduce a so-called support function. Let $O \in \mathbb S^2(k_1^2)$ be a point inside a convex domain bounded by the curve $\gamma$. Let us consider on $\mathbb S^2(k_1^2)$ the polar coordinate system with the origin at the point $O$ and with the angular parameter $\theta$ with $\theta \in [0, 2\pi)$. For each geodesic ray $OL$, emanating from $O$ and forming an angle $\theta$ with some fixed direction, let us consider a geodesic perpendicular to $OL$ and that is supporting for the curve $\gamma$ at some point $P$. By strict convexity of $\gamma$ such geodesic always exists, and the point $P$ is unique. Denote $h(\theta)$ to be the distance from the point $O$ to the supporting geodesic above, measured along the ray $OL$. The function $h(\theta) \colon [0, 2\pi) \to [0, \pi/k_1)$ is a \textit{support function} for the curve $\gamma$.

We should note here that a convex curve is uniquely determined by its support function. 

The next proposition shows a relationship between a support function and the \textit{radius of curvature} of a curve $\gamma \subset \mathbb S^2(k_1^2)$ later denoted by $R$. We remind that, by definition, $R(\theta) = 1/k(\theta)$, where $k$ is the geodesic curvature of $\gamma$. For our curve we will call a \textit{contact radius of curvature} the following quantity 
\begin{equation*}
g(\theta)=\frac{1}{k_1}\tan\left(k_1 h(\theta)\right).
\end{equation*}

\begin{remark}
\label{remcoordsys}
Observe that for $\lambda$-convex curves we can always choose the origin of the polar coordinate system in such a way, that $$h(\theta) \leqslant \frac{1}{k_1}\arccot \frac{\lambda}{k_1} < \frac{\pi}{2 k_1}$$ or, equivalently, $g (\theta) \leqslant {1}/{\lambda}$ for all $\theta \in [0, 2\pi)$. We assume that everywhere below a coordinate system is chosen in the described way.
\end{remark}

Using the notion of a contact radius of curvature for $\lambda$-convex curves on a sphere we can prove the following proposition.

\begin{proposition}
\label{propsupfunc}

Let $\gamma \subset \mathbb S^2(k_1^2)$ be a closed embedded $\lambda$-convex curve on the sphere. If $R(\theta)$ and $g(\theta)$ are respectively the radius and the contact radius of curvature for $\gamma$, then
\begin{equation}
\label{propsupfunceq1}
R = \frac{g'' + g}{\left(1+\frac{ k_1^2 {g'}^2}{1 + k_1^2 g^2}\right)^\frac{3}{2}} \text{ for almost all $\theta \in [0,2\pi]$}
\end{equation}
(here by the prime sign we denote a derivative with respect the variable $\theta$).
\end{proposition}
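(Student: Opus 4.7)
My plan is to realize $\gamma$ as the envelope of its one-parameter family of supporting great circles and read off $R$ from that description. Embed $\mathbb S^2(k_1^2)$ as the sphere of radius $1/k_1$ in $\mathbb R^3$ with $O$ at the north pole, and write $e_1(\theta)=(\cos\theta,\sin\theta,0)$, $e_3=(0,0,1)$, $c(\theta)=\cos(k_1 h(\theta))$, $s(\theta)=\sin(k_1 h(\theta))$. The supporting geodesic at parameter $\theta$ is the great circle whose plane has unit normal
\[
\vec n(\theta) = c(\theta)\,e_1(\theta)-s(\theta)\,e_3,
\]
i.e.\ the outward unit tangent to the ray $OL$ at the foot $F(\theta)$. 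The contact point $P(\theta)\in\gamma$ satisfies $P\cdot\vec n=0$ and, by the envelope condition, $P\cdot\vec n'=0$; together with $|P|=1/k_1$ this forces
\[
P(\theta)=\frac{1}{k_1}\,\frac{\vec n\times\vec n'}{|\vec n\times\vec n'|},
\]
with the sign fixed by requiring $P$ to lie in the hemisphere of $O$.

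Next I would invoke the formula $\kappa_g=k_1\det(P,P',P'')/|P'|^3$ for the geodesic curvature of a curve on the sphere of radius $1/k_1$. Since $P\cdot\vec n=P\cdot\vec n'=0$ and $P\cdot P'=0$, both $P$ and $P'$ lie in $\vec n^\perp$; combined with the Lagrange identity $(\vec n\times\vec n')\times\vec n=\vec n'$ this shows $P'$ is proportional to $\vec n'$. Writing $P'=\beta\vec n'$ and differentiating $P\cdot\vec n'=0$ gives $\beta=-(P\cdot\vec n'')/|\vec n'|^2$, while differentiating $P'\cdot\vec n=0$ gives $P''\cdot\vec n=P\cdot\vec n''$. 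Inserting both identities into the curvature formula compresses it to
\[
R(\theta) \;=\; \frac{P\cdot\vec n''}{|\vec n'|^2}
\]
(up to a global sign, fixed by the orientation of $\gamma$). This is the conceptual core of the argument.

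It then remains to compute both scalars in terms of $(h,h',h'')$ and translate to $g$. A direct differentiation using $e_1'=e_2$, $e_2'=-e_1$ yields $|\vec n'|^2=|\vec n\times\vec n'|^2=c^2+k_1^2 h'^2$ and
\[
(\vec n\times\vec n')\cdot\vec n''=-\bigl(c^2 s+k_1 h''\,c+2k_1^2 h'^2\,s\bigr),
\]
so $R=\bigl(c^2 s+k_1 h''c+2k_1^2 h'^2 s\bigr)/\bigl(k_1(c^2+k_1^2 h'^2)^{3/2}\bigr)$. Substituting $c^2=(1+k_1^2 g^2)^{-1}$, $s=k_1 g c$, $h'=g'c^2$, and $h''=g''c^2-2k_1^2 g g'^2 c^4$ into the numerator, two terms of size $2k_1^3 g g'^2 c^5$ cancel and what remains is $k_1 c^3(g''+g)$; the denominator factors as $c^3\bigl(1+k_1^2 g'^2/(1+k_1^2 g^2)\bigr)^{3/2}$, the common $c^3$ cancels, and formula \eqref{propsupfunceq1} emerges.

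The main obstacle is precisely this final algebraic cancellation: the complicated expression in $h,h',h''$ reorganizes into the remarkably simple $g''+g$ only after the non-obvious change of variable $g=\tan(k_1 h)/k_1$, which is the reason that the contact radius of curvature is the correct intermediate quantity to introduce. Because $\gamma$ is only twice differentiable almost everywhere, $h''$ (and hence $g''$) exists only a.e., which accounts for the qualifier in \eqref{propsupfunceq1}.
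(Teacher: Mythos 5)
Your computation is correct --- I checked the determinant $\det(\vec n,\vec n',\vec n'')=-(c^2s+k_1h''c+2k_1^2h'^2s)$, the reduction $R=(P\cdot\vec n'')/|\vec n'|^2$, and the final cancellation after the substitution $g=\tan(k_1h)/k_1$, and they all agree with the paper's intermediate formula $R=\bigl(h''\cos h+2h'^2\sin h+\cos^2(h)\sin h\bigr)/\bigl(h'^2+\cos^2(h)\bigr)^{3/2}$ (stated there with $k_1=1$). The route, however, is genuinely different in execution. The paper introduces the polar (dual) curve $\gamma^\ast$ --- which is precisely your family of normals $\vec n(\theta)$, since $t=\pi/2+h(\theta)$ parametrizes exactly $c\,e_1-s\,e_3$ --- proves the duality relation $R(\theta)=\kappa_g^\ast(\theta)$ by a vector argument, and then computes $\kappa_g^\ast$ intrinsically in polar coordinates. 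You never pass through the dual curve's curvature: you reconstruct $\gamma$ itself as the envelope $P=\vec n\times\vec n'/(k_1|\vec n\times\vec n'|)$ and evaluate its geodesic curvature extrinsically via $\kappa_g=k_1\det(P,P',P'')/|P'|^3$, collapsing it to $P\cdot\vec n''/|\vec n'|^2$ by the orthogonality identities. Your version is more self-contained (no separate lemma identifying $R$ with the curvature of the dual); the paper's version buys something it needs elsewhere, namely the explicit polar correspondence and the relation $\kappa_g^\ast=R$, which are reused in Section 3 to derive the dual Theorems 4 and 5.

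One point where you are thinner than the paper: the regularity of $h$. You say that a.e.\ second differentiability of $\gamma$ ``accounts for'' the a.e.\ existence of $h''$, but a.e.\ differentiability statements do not automatically transfer between the arc-length and the $\theta$-parametrizations, and your envelope formula for $P$ already presupposes that $h'$ exists everywhere and that $\vec n'\neq 0$. The paper closes this by showing that $\gamma^\ast$ admits an interior tangent circle of curvature $1/\lambda$ at every point, hence is $C^{1,1}$, hence $h\in C^{1,1}$; then $h'$ is Lipschitz, $h''$ exists a.e., and $|\vec n'|^2=c^2+k_1^2h'^2\geqslant c^2>0$ by the choice of origin in Remark 1. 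This $C^{1,1}$ statement is not a throwaway technicality --- it is also what makes the phase variable absolutely continuous in the optimal control formulation later --- so you should either prove it or cite it rather than assert the a.e.\ existence of $h''$ as obvious.
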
 

\begin{remark}
If in~(\ref{propsupfunceq1}) we take a limit with $k_1  \to 0$, we will get the classical formula $R = h'' + h$, which relates a support function and the radius of curvature of a plane curve.
\end{remark}

\begin{proof}

Without loss of generality we may assume $k_1 = 1$, and let us denote $\mathbb S^2(1)$ as $\mathbb S^2$.

We start with introducing the family $l_\gamma \subset \mathbb S^2$ of all supporting geodesic lines for $\gamma$. For each geodesic $l \in l_\gamma$ let $\xi(l)$ be an outward unit normal for $l$ with respect to the curve $\gamma$. Note that if at the point $l \cap \gamma$ the geodesic $l$ is tangent to $\gamma$, then $\xi(l)$ will also be an outward normal for $\gamma$.

Let us consider a map generated by unit normals $\xi(l)$ as follows:
\begin{equation}
\label{polarmap}
\begin{aligned}
\xi \colon &l_\gamma \to \mathbb S^{2}, \\
&l \mapsto \xi(l).
\end{aligned}
\end{equation}
This map is called the \textit{polar map} of the curve $\gamma$. The curve $\gamma^\ast \subset \mathbb S^{2}$ defined as $\gamma^\ast = \xi (l_\gamma)$ is the \textit{polar image} of $\gamma$ (or the \textit{dual curve}).   

Since $\gamma$ is convex, its polar map is a bijection between the set $l_\gamma$ and the set of points on $\gamma^\ast$. Moreover, $(\gamma^\ast)^\ast = \gamma$.

It appears that defining a curve using a support function is equivalent to defining its dual curve. To explain this statement, let $(t, \theta)$ be the coordinates in our polar coordinate system. From the definition of the polar map~(\ref{polarmap}) it easily follows that, assuming both $\gamma$ and $\gamma^\ast$ lies on the same sphere, the polar image $\gamma^\ast$ will be given by the equation
\begin{equation}
\label{tfordual}
t = \frac{\pi}{2} + h(\theta).
\end{equation}

From~(\ref{tfordual}) we obtain that the polar image of a circle with the radius $r$ will be a circle with the radius $\frac{\pi}{2} - r$. Note that the initial circle and its polar image have mutually reciprocal geodesic curvatures.

Using the observation above, we can deduce one important property of a polar map~(\ref{polarmap}). Namely, if $\gamma$ is a closed embedded $\lambda$-convex curve, then $\gamma^\ast$ will be a closed embedded curve through each point $P^\ast$ of which passes a circle of the curvature ${1}/{\lambda}$ such that in some neighborhood of $P^\ast$ this circle lies inside the closed domain bounded by $\gamma^\ast$. This property instantly implies that the curve $\gamma^\ast$ is in fact $C^{1,1}$-smooth, which in turn, by using~(\ref{tfordual}), implies $C^{1,1}$-smoothness of the support function $h(\theta)$.

Hence $\gamma^\ast$ is almost everywhere $C^2$-smooth, and thus the geodesic curvature $\kappa_g^\ast$ of $\gamma^\ast$ is almost everywhere well-defined. Let us show that for all values $\theta$ from $[0, 2\pi)$, where $\kappa^\ast_g$ is defined (we assume $\kappa_g^\ast \geqslant 0$), it is equal to the radius of curvature $R(\theta)$ of the curve $\gamma$, that is
\begin{equation}
\label{polarcurvprop}
\kappa_g^\ast(\theta) = R(\theta) \text{ a.e. on } [0, 2\pi).
\end{equation}

For regular curves this fact is well-known, but by the lack of a direct reference we will prove it here for completeness of exposition.  

We may assume that the curve $\gamma^\ast \subset \mathbb S^2$ with $\mathbb S^2 \subset \left(\mathbb E^3,\left<\cdot,\cdot\right> \right)$ is given in an arc-length parametrization $s^\ast$ by the unit radius-vector $r(s^\ast)$. Set $\xi^\ast$ to be an inward unit normal to $\gamma^\ast$.

Since all vectors under consideration are of the unit length, and since $\gamma^\ast$ lies on a sphere, we have
\begin{equation}
\label{vectid1}
\left<r',\xi^\ast\right> = \left<r,\xi^\ast\right> =  0,
\end{equation}
\begin{equation}
\label{vectid2}
\left<r', r\right> =  \left<{\xi^\ast}',\xi^\ast\right>= 0,
\end{equation}
where the prime sign denotes a derivative with respect to $s^\ast$.

From~(\ref{vectid1}) we obtain
\begin{equation}
\label{vectnorm}
\left<r,{\xi^\ast}'\right> = 0.
\end{equation}
Thus the vector $r$ is in fact a unit normal to the polar curve $\gamma$, treated as a curve in $\mathbb S^2$, and besides that $\xi^\ast$ is the radius-vector for $\gamma$.

Let us pick a point on $\gamma^\ast$ at which the geodesic curvature $\kappa_g^\ast$ is defined. For the rest of the proof of~(\ref{polarcurvprop}) we compute at this point. By definition
\begin{equation}
\label{geocurv}
\kappa_g^\ast = \left<r'',\xi^\ast \right>,
\end{equation}
Hence, using~(\ref{vectid1}), 
\begin{equation}
\label{firstterm1}
\left<r',{\xi^\ast}'\right> = -\kappa^\ast_g.
\end{equation}
From~(\ref{vectid2}) and~(\ref{vectnorm}) it follows that the vector ${\xi^\ast}'$ is collinear to the vector $r'$. Together with~(\ref{firstterm1}) this collinearity implies
\begin{equation}
\label{rodrig}
{\xi^\ast}' = -\kappa^\ast_g r'.
\end{equation}

If $s$ is the arc-length parameter on $\gamma$, then from~(\ref{rodrig}) we have
\begin{equation}
\label{secondterm}
\frac{ds}{ds^\ast} = \kappa_g^\ast, \,\,\, \frac{d\xi^\ast}{ds} = - r'.
\end{equation}
From the last relation it follows that the second derivative of $\xi^\ast$ is well-defined. Thus, using~(\ref{vectnorm}) and~(\ref{firstterm1}),
\begin{equation}
\label{firstterm2}
\left<{\xi^\ast}'',r\right>=\kappa^\ast_g.
\end{equation}

Finally, using~(\ref{firstterm1}) and~(\ref{secondterm}), with respect to the normal $r$ we compute
\begin{equation}
\label{geocurvpolar}
R = \left<\frac{d^2\xi^\ast}{{ds}^2}, r \right>^{-1} = \left<{\xi^\ast}'', r\right>^{-1}\left(\frac{ds}{ds^\ast}\right)^2 = \kappa_g^\ast,
\end{equation}
which proves~(\ref{polarcurvprop}).

Therefore, by property~(\ref{polarcurvprop}) in order to get formula~(\ref{propsupfunceq1}) it suffices to calculate the geodesic curvature of $\gamma^\ast$ given in a polar coordinate system with the metric $ds^2 = dt^2 + \sin^2 t \, d\theta^2$ by equation~(\ref{tfordual}). We should pay attention that, since the origin of the coordinate system and the polar curve lie in the distinct hemispheres, the geodesic curvature of $\gamma^\ast$ given by~(\ref{tfordual}) will be negative. Hence, assuming $\kappa_g^\ast \geqslant 0$, we should reverse the sign. After standard computations we will obtain
\begin{equation}
\label{propsupfunceq8}
R= \kappa^\ast_g = \frac{h'' \cos h + 2 h'^2 \sin h + \cos^2 (h) \sin h}{\left(h'^2 + \cos^2(h)\right)^{\frac{3}{2}}}.
\end{equation}

Substituting $h(\theta) = \arctan g(\theta)$ in~(\ref{propsupfunceq8}) we come to
$$
R = \frac{g'' + g}{\left(1 + \frac{g'^2}{1 + g^2}\right)^{\frac{3}{2}}},
$$
as required. Proposition~\ref{propsupfunc} is proved.
\qed
\end{proof}

In order to prove Theorem~\ref{mainthalt} we should fix the lengths of our curves and look for a minimum of the area of convex domains bounded by these curves. To formalize this problem, besides Proposition~\ref{propsupfunc}, we will need the expression for the length $L(\gamma)$ of a curve $\gamma$ and the expression for the area $A(\gamma)$ of the convex domain bounded by $\gamma$ in terms of its contact radius of curvature $g(\theta)$ and its radius of curvature $R(\theta)$. Without loss of generality we again may assume $k_1 = \lambda = 1$.

Using~(\ref{secondterm}),~(\ref{geocurvpolar}), and polar equation~(\ref{tfordual}) of the dual curve, we obtain
\begin{equation}
\label{mainproofeq0}
\frac{ds}{d\theta} = \frac{ds}{ds^\ast} \frac{ds^\ast}{d\theta} = R \sqrt{h'^2 + \cos^2(h)} =  R \frac{\sqrt{1 + g^2 + {g'}^2}}{1 + g^2}.
\end{equation}
Therefore, the length of $\gamma$ can be written in terms of $g$ and $R$ as follows:
\begin{equation}
\label{mainproofeq1}
L(\gamma) = \int \limits_0^{2\pi} ds(\theta) =  \int \limits_0^{2\pi} \frac{ds}{d\theta} d\theta =  \int \limits_0^{2\pi} R \frac{\sqrt{1 + g^2 + {g'}^2}}{1 + g^2} d\theta.
\end{equation}

Next, let us express the enclosed area $A(\gamma)$ in terms of $g(\theta)$. If $\{P_i \colon i \in I\} \subset \gamma$ is a no more then countable set of vertexes of the curve $\gamma$, then by the Gauss~-- Bonnet formula
\begin{equation}
\label{mainproofeq21}
A(\gamma) = 2\pi  - \int \limits_\gamma \frac{1}{R(s)} ds - \sum \limits_{i \in I} \varphi_i,
\end{equation}
where $\varphi_i$ are the jump angles of the tangent at the vertex $P_i$, $i\in I$. Using~(\ref{mainproofeq0}), we can rewrite~(\ref{mainproofeq21}) in the following way:
\begin{equation}
\label{mainproofeq22}
A(\gamma) = 2\pi - \int \limits_{[0,2\pi] \backslash \bigcup \limits_{i \in I} [\alpha_i, \beta_i]} \frac{\sqrt{1 + g^2 + {g'}^2}}{1 + g^2} d\theta - \sum \limits_{i \in I} \varphi_i,
\end{equation}
where the intervals $[\alpha_i, \beta_i]$ of values for $\theta$ correspond to the vertex $P_i$. Let us show that, in fact, for all $i \in I$
\begin{equation}
\label{mainproofeq23}
\int \limits_{\alpha_i}^{\beta_i} \frac{\sqrt{1 + g^2 + {g'}^2}}{1 + g^2} d\theta = \varphi_i.
\end{equation}
Indeed, from~(\ref{propsupfunceq1}) it follows that the contact radius of curvature of the vertex $P_i$ is equal to
\begin{equation}
\label{mainproofeq24}
g(\theta) = \tan(u_i) \cos(\theta - \theta_i),
\end{equation}
where $(u_i, \theta_i)$ are polar coordiantes of $P_i$, and $u_i$ is the distance from the origin $O$ to $P_i$. Substituting~(\ref{mainproofeq24}) into the left side of~(\ref{mainproofeq23}), we get
\begin{equation}
\label{mainproofeq25}
\begin{aligned}
\int \limits_{\alpha_i}^{\beta_i} \frac{\sqrt{1 + g^2 + {g'}^2}}{1 + g^2} d\theta &= \cos u_i \int  \limits_{\alpha_i}^{\beta_i} \frac{1}{\cos^2 u_i + \sin^2 u_i \cos^2(\theta - \theta_i)} d\theta\\
&=\left. \arctan \left(\cos u_i \cdot \tan(\theta - \theta_i)\right) \right|_{\alpha_i}^{\beta_i}.
\end{aligned}
\end{equation}  
Suppose $\varphi_{\alpha_i}$ and $\varphi_{\beta_i}$ are the angles between the corresponding to $\alpha_i$ and $\beta_i$ semi-tangents to $\gamma$ and the coordinate line $u = u_i$ at $P_i = (u_i,\theta_i)$; then
\begin{equation}
\label{mainproofeq26}
\varphi_i = \varphi_{\beta_i} - \varphi_{\alpha_i}.
\end{equation}
Moreover, straightforward computations show that $$\tan \varphi_{\alpha_i} = \cos u_i \cdot \tan (\alpha_i - \theta_i), \,\, \tan \varphi_{\beta_i} = \cos u_i \cdot \tan (\beta_i - \theta_i).$$ The last equalities together with~(\ref{mainproofeq25}) and~(\ref{mainproofeq26}) proves assertion~(\ref{mainproofeq23}).

Therefore, combining~(\ref{mainproofeq22}) and~(\ref{mainproofeq23}), we finally obtain
\begin{equation}
\label{mainproofeq27}
A(\gamma) = \int \limits_0^{2\pi} \left(1 - \frac{\sqrt{1 + g^2 + {g'}^2}}{1 + g^2} \right)d\theta.
\end{equation}

Thus we need to minimize $A(\gamma)$ taking into account~(\ref{propsupfunceq1}), and setting $L(\gamma) = L_0 = const$.
Let us interpret this problem as an optimal control problem with $t = \theta$, $u(t) = R(t)$, $x_1(t)=g(t)$, and $x_2(t) = \dot x_1(t) = g' (\theta)$. 

Since $\gamma$ is a $\lambda$-convex curve with $\lambda=1$, we have the restriction
\begin{equation}
\label{mainproofeq3}
0 \leqslant u(t) \leqslant 1 \text{ a.e. on }[0, 2 \pi].
\end{equation}

Taking into consideration~(\ref{mainproofeq3}), and rewriting~(\ref{propsupfunceq1}),~(\ref{mainproofeq1}), and~(\ref{mainproofeq27}) using the notations introduced above, we come to the following formal problem: 
\begin{equation}
\label{optcontrsys}
\begin{aligned}
&\int \limits_0^{2\pi} \left(1 - \frac{\sqrt{1 + x_1^2 + x_2^2}}{1 + x_1^2}\right) dt \rightarrow \min \\
&\int \limits_0^{2\pi} u \frac{\sqrt{1 + x_1^2 + x_2^2}}{1 + x_1^2} dt = L_0 \\
&\left\{
\begin{aligned}
&\dot x_1 = x_2 \\
&\dot x_2 = u \left(\frac{1 + x_1^2 + x_2^2}{1 + x_1^2}\right)^\frac{3}{2} - x_1  
\end{aligned} \right.\text{ a.e. on }[0, 2\pi]\\
&0 \leqslant u(t) \leqslant 1 \text{\, a.e. on }[0, 2\pi]\\
&x_1(0) = x_1 (2 \pi) \\ &x_2(0) = x_2 (2 \pi).
\end{aligned}
\end{equation}

Moreover, in problem~(\ref{optcontrsys}) the control $u(t)$ is bounded measurable function on $[0,2 \pi]$, and the phase variable $x(t)$ defined as $x(t) = (x_1(t), x_2(t))$ is absolutely continuous function on $[0,2\pi]$ since $h(\theta) \in C^{1,1}[0,2\pi]$ and $h(\theta) < \pi/2$ (see Remark~\ref{remcoordsys}). In addition, all the functions used in the functional, the integral constraint and the controlled system are continuous with respect to all variables. The same smoothness condition holds for derivatives with respect to $x$ of the mentioned functions. 

Therefore, the pair $(x,u)$ that satisfies the controlled system from~(\ref{optcontrsys}) is a controlled process in the sense of Definition~\ref{contrpros}, and if $(x,u)$ also satisfies  the integral constraint and the boundary conditions of problem~(\ref{optcontrsys}), then the corresponding trajectory $\{(x(t),u(t)) \colon t\in[0,2\pi]\}$ is an admissible trajectory.  

By Blaschke's selection theorem (see~\cite{Bla}) the posed problem of minimizing the area bounded by convex curves while keeping their lengths fixed has a solution. Hence the formalized version~(\ref{optcontrsys}) of the problem also has a solution. Thus in our case Pontryagin's Maximum Principle is a criterion for optimality of admissible trajectories.

If $\{\left(\hat x(t), \hat u(t)\right) \colon t \in [0, 2\pi]\}$ is an optimal trajectory for problem~(\ref{optcontrsys}), then by Pontryagin's Maximum Principle it must satisfy the conditions (i)~-- (iv). Let us rewrite them specifically for our problem. The maximum principle implies that there exist numerical multipliers $\lambda_0$, $\lambda_1 \in \mathbb R$ (with $\lambda_0 \geqslant 0$) and absolutely continuous on $[0,2\pi]$ functions $p_1(t)$ and $ p_2(t)$ such that they don't equal zero simultaneously, and the functions $p_i$ are the solutions of the adjacent system almost everywhere on $[0,2\pi]$, which in our case reads
\begin{equation}
\label{adjp1}
\dot p_1 = p_2 \left(1 + \frac{3 u x_1 x_2^2 \sqrt{1 + x_1^2 + x_2^2}}{\left(1 + x_1^2\right)^{\frac{5}{2}}}\right) + \frac{x_1 (\lambda_0 + \lambda_1 u) \left(1 + x_1^2 + 2 x_2^2\right)}{\left(1 + x_1^2\right)^2 \sqrt{1 + x_1^2 + x_2^2}},
\end{equation}
\begin{equation}
\label{adjp2}
\dot p_2 = -p_1 - p_2 \frac{3 u x_2 \sqrt{1 + x_1^2 + x_2^2}}{\left(1 + x_1^2\right)^{\frac{3}{2}}} - \frac{x_2 (\lambda_0 + \lambda_1 u)}{\left(1 + x_1^2\right) \sqrt{1 + x_1^2 + x_2^2}},
\end{equation}
 and Pontryagin's function, that for~(\ref{optcontrsys}) has the form
\begin{equation}
\label{ham}
\begin{aligned}
\mathcal H (x, u, p, \lambda_0, \lambda_1) &= p_1 x_2 + p_2 \left(  u \left(\frac{1 + x_1^2 + x_2^2}{1 + x_1^2}\right)^\frac{3}{2} - x_1\right) \\
&+ \lambda_1 \left(u \frac{\sqrt{1 + x_1^2 + x_2^2}}{1 + x_1^2}\right) - \lambda_0 \left(1 - \frac{\sqrt{1 + x_1^2 + x_2^2}}{1 + x_1^2}\right),
\end{aligned}
\end{equation}
satisfies the maximality condition   
\begin{equation}
\label{mincond}
\mathcal H(\hat x, \hat u, p,  \lambda_0, \lambda_1) = \max \limits_{u \in [0, 1]} \mathcal H(\hat x, u,  p, \lambda_0, \lambda_1)  \text{ a.e. on }[0, 2\pi].
\end{equation}

Let us analyze~(\ref{mincond}) in more details. In our case Pontryagin's function~(\ref{ham}) is linear with respect to $u$ and be written as $$\mathcal H = u \mathcal H_1 + \mathcal H_2,$$
where
\begin{equation}
\label{hamh1}
\mathcal H_1 = \lambda_1 \frac{\sqrt{1 + x_1^2 + x_2^2}}{1 + x_1^2} + p_2 \left( \frac{1 + x_1^2 + x_2^2}{1 + x_1^2}\right)^\frac{3}{2}.  
\end{equation}

From~(\ref{mincond}) it follows that the optimal control for problem~(\ref{optcontrsys}) must be of the form (here and below, for brevity, we will denote the the optimal solution $(\hat x, \hat u)$ as $(x, u)$)
\begin{equation}
\label{contr1}
u(t) = \left\{
\begin{aligned}
&1, \text{ for } \mathcal H_1 > 0 \\
&0, \text{ for } \mathcal H_1 < 0 \\
&\text{undefined}, \text{ for } \mathcal H_1 = 0
\end{aligned}
\right. \text{ a.e. on }[0, 2\pi].
\end{equation}

In order to determine the control completely, we need to consider a so-called \textit{singular trajectory}, that is an admissible for~(\ref{optcontrsys}) trajectory on which $\mathcal H_1$ is identically zero for some interval $(t_1,t_2) \subset [0,2\pi]$. For such trajectories the maximum principle doesn't give any information about control's behavior.

 From~(\ref{hamh1}) the condition $\mathcal H_1 = 0$ is equivalent to that on the singular trajectory
\begin{equation}
\label{singularextrp2}
p_2 = - \lambda_1 \frac{\sqrt{1 + x_1^2} }{1 + x_1^2 + x_2^2}.
\end{equation}

Substituting~(\ref{singularextrp2}) into differential equation~(\ref{adjp2}) and solving for $p_1(t)$ after all necessary cancellations we will get that on the singular trajectory
\begin{equation}
\label{singularextrp1}
p_1 =  x_2  \frac{\lambda_1 x_1 \sqrt{1 + x_1^2} - \lambda_0 \sqrt{1 + x_1^2 + x_2^2} }{(1 + x_1^2)(1 + x_1^2 + x_2^2)}.
\end{equation}

At the same time, the functions $p_1(t)$ and $p_2(t)$ have to satisfy remaining equation~(\ref{adjp1}) from the adjacent system. If we substitute $p_1$ and $p_2$ into~(\ref{adjp1}) and simplify it using the expressions for $\dot x_1$ and $\dot x_2$ from~(\ref{optcontrsys}), then we will come to the equality 
\begin{equation*}
\label{adjsextrcons}
\frac{\lambda_1 - \lambda_0 u}{\left(1 + x_1^2\right)^{\frac{3}{2}}}=0.
\end{equation*} 
Hence on the whole interval $(t_1,t_2)$ the equality 
\begin{equation}
\label{conscond}
\lambda_1 - \lambda_0 u(t) = 0
\end{equation}
must hold.

 If $\lambda_0 = 0$, then $\lambda_1=0$, and from~(\ref{singularextrp2}) and~(\ref{singularextrp1}) it follows that $p_1 \equiv p_2 \equiv 0$ on $(t_1,t_2)$. This contradicts the non-triviality condition (ii). Thus we may assume $\lambda_0 = 1$ and further consider only so-called \textit{normal trajectories} for which such a condition is met. With this in mind, equality~(\ref{conscond}) is possible only if $u(t) \equiv \lambda_1$ on the whole interval $(t_1,t_2)$. Since $\lambda_1$ is a constant real number and thus doesn't depend on the interval $(t_1,t_2)$, we conclude that the optimal control is equal $\lambda_1$ on any piece of the singular trajectory.

Therefore, (\ref{contr1}) can be rewritten as
\begin{equation}
\label{contr2}
u(t) = \left\{
\begin{aligned}
1,\,\, &\text{ for } \mathcal H_1 > 0 \\
0,\,\, &\text{ for } \mathcal H_1 < 0 \\
\lambda_1, &\text{ for } \mathcal H_1 = 0
\end{aligned}
\right. \text{ on }[0, 2\pi].
\end{equation}
Since the control can be redefined on a set of measure zero if necessary, we indeed may assume that~(\ref{contr2}) holds on the whole segment $[0,2\pi]$. Also, because $u(t) \in [0, 1]$, then $\lambda_1$ have to satisfy the inequality $1 \geqslant \lambda_1 \geqslant 0$. 

Note that from the geometric point of view the singular trajectory for~(\ref{optcontrsys}) corresponds to a circle of curvature equal to $1/\lambda_1$ if $\lambda_1 \neq 0$, and this circle is a set where the bang-bang control $u$ switches its value.

Let us show now that, in fact, in problem~(\ref{optcontrsys}) no arc of the singular extremal can be optimal. For this purpose let us consider the necessary condition for an arc of a singular trajectory to be a part of an optimal trajectory.

Recall that a natural number $q$ is an \textit{order} of the singular trajectory $(x^*,u^*)$ if it is the minimal number such that
$$\frac{\partial}{\partial u}\frac{d^{2q}}{dt^{2q}}\mathcal H_1 \left|_{(x^*,u^*)}\right. \neq 0, $$ 
where the time derivatives are taken with respect to the corresponding controlled system (see~\cite{KKM}).

An arc of a singular trajectory of order $q$ satisfies the so-called \textit{generalized Legendre~-- Clebsch condition} (see~\cite{KKM}) if along this arc
\begin{equation}
\label{LCC}
(-1)^q \frac{\partial}{\partial u} \left[\frac{d^{2q}}{dt^{2q}}\mathcal H_1\right] \leqslant 0.
\end{equation}

The necessary condition mentioned above states the following (see~\cite{KKM}, and also~\cite{B}): if an arc of a singular trajectory is a part of an optimal trajectory, then this arc must satisfy generalized Legendre~-- Clebsch condition. 

In our case we have the singular trajectory of order 1 ($q=1$). It can be easily verified by a straightforward computation. Using~(\ref{singularextrp2}),~(\ref{singularextrp1}), and the assumption $\lambda_0=1$, along the singular trajectory we have
$$
-\frac{\partial}{\partial u}\frac{d^{2}\mathcal H_1}{dt^{2}} = \frac{\left(1 + x_1^2 + x_2^2\right)^\frac{3}{2}}{\left(1 + x_1^2\right)^3} > 0. 
$$

The last inequality contradicts necessary condition~(\ref{LCC}). Hence inclusion of the singular trajectory in the solution of problem~(\ref{optcontrsys}) is not optimal. Therefore, on the segment $[0, 2\pi]$ we have a bang-bang control with only values 0 or 1.

From the geometric point of view we obtained that an optimal curve must consist of circular arcs of curvature equal to $1$. Thus the solution of~(\ref{optcontrsys}) belongs to the class of $1$-convex polygons with possibly infinite number of vertexes. It appears that for such a class we have the following geometric proposition, which is the discrete version of Theorem~\ref{mainthalt}.

\begin{proposition}
\label{mainthspecialcase}

Let $\gamma$ and $\tilde\gamma$ be respectively a $\lambda$-convex polygon and a $\lambda$-convex lune on the sphere $\mathbb S^2(k_1^2)$. If $$L(\gamma) = L(\tilde\gamma),$$
then $$A(\gamma) \geqslant A(\tilde\gamma).$$
Moreover, the equality holds if and only if $\gamma$ and $\tilde\gamma$ are congruent.

\end{proposition}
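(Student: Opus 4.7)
The plan is to prove Proposition~\ref{mainthspecialcase} by induction on the number $n$ of circular arcs comprising $\gamma$. The key tool is the Gauss--Bonnet theorem, which for a $\lambda$-convex polygon on $\mathbb{S}^2(k_1^2)$ with $n$ vertices having exterior jump angles $\varphi_1,\dots,\varphi_n>0$ yields
$$
A(\gamma) \;=\; \frac{1}{k_1^2}\left(2\pi \,-\, \lambda\, L(\gamma) \,-\, \sum_{i=1}^{n}\varphi_i\right),
$$
which is the piecewise-smooth version of the identity already used in~(\ref{mainproofeq21}). Under the constraint $L(\gamma)=L$ fixed, minimizing $A(\gamma)$ is therefore equivalent to maximizing the total vertex turning $\Phi(\gamma):=\sum_i\varphi_i$.

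For the base case $n=2$, the polygon $\gamma$ is itself a $\lambda$-convex lune. Reflection of $\mathbb{S}^2(k_1^2)$ across the geodesic joining the two vertices of $\gamma$ is an isometry that interchanges the two arcs, since both have curvature $\lambda$ and the same pair of endpoints; hence each arc has length $L/2$, and the geometry of $\gamma$ is determined uniquely up to an isometry by the values of $\lambda$ and $L$. Thus $\gamma\cong\tilde{\gamma}$ and equality holds.

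For the inductive step $n\geq 3$, I would construct from $\gamma$ a $\lambda$-convex polygon $\gamma^{\sharp}$ with $n-1$ arcs, with $L(\gamma^{\sharp})=L$ and $A(\gamma^{\sharp})<A(\gamma)$; applying the inductive hypothesis to $\gamma^{\sharp}$ then gives $A(\gamma)>A(\gamma^{\sharp})\geq A(\tilde{\gamma})$. To produce $\gamma^{\sharp}$, I would select two adjacent arcs $a_i, a_{i+1}$ meeting at a vertex $v_i$ and deform $\gamma$ through a one-parameter family of length-preserving $\lambda$-convex polygons along which the jump angle $\varphi_i$ monotonically decreases to $0$. When $\varphi_i=0$, the tangents of $a_i$ and $a_{i+1}$ coincide at $v_i$, and since both arcs share curvature $\lambda$ they lie on a common circle and fuse into a single arc, yielding the desired $(n-1)$-arc polygon. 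By the Gauss--Bonnet identity above, it suffices to arrange the deformation so that $\Phi$ is strictly monotone increasing.

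The principal obstacle is constructing this deformation in a globally consistent manner. A purely local slide of $v_i$ generically breaks either the fixed-length constraint or the $\lambda$-convexity at a neighboring vertex, so the variation must simultaneously adjust a second arc length or vertex position to compensate. Controlling the sign of the induced variation of $\Phi$ under these combined adjustments, uniformly across all possible $n$-arc configurations, is the central technical issue and the point at which the specific spherical geometry of circles of curvature $\lambda$ enters essentially. As an alternative, one may avoid the explicit deformation and instead parametrize $n$-arc $\lambda$-convex polygons by their arc lengths and jump angles subject to the closing condition, set up the resulting finite-dimensional constrained optimization of $A$ against $L$ via Lagrange multipliers, and argue that no interior critical point with $n\geq 3$ is minimizing, so that the minimum is forced onto the boundary of the configuration space where some arc degenerates and $n$ decreases.
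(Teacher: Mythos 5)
Your reduction via Gauss--Bonnet (minimizing $A$ at fixed $L$ is the same as maximizing the total jump angle $\sum_i \varphi_i$) is correct and is implicitly the same bookkeeping the paper uses, and your base case $n=2$ is fine. But the proof has a genuine gap exactly where you flag it: the inductive step is never carried out. You describe the deformation you would like to have --- a length-preserving, $\lambda$-convexity-preserving family that drives one jump angle to zero while strictly increasing $\sum_i\varphi_i$ --- and then state that constructing it consistently, and controlling the sign of the variation of $\sum_i\varphi_i$ under the compensating adjustments, is ``the central technical issue.'' That issue is the entire content of the proposition; neither of your two suggested routes (the local slide or the Lagrange-multiplier analysis on the configuration space of arc lengths and angles) is executed, so nothing is proved for $n\geqslant 3$. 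A secondary gap: a $\lambda$-convex polygon in the paper's sense may have countably infinitely many arcs (this is exactly the class produced by the bang--bang control), so an induction on a finite number of arcs does not cover the statement as needed; you would additionally have to justify passage to the limit or an approximation argument.

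For comparison, the paper resolves precisely these two difficulties by a different organization. It first reduces the general case to the centrally symmetric one: cut $\gamma$ along a diameter $PQ$, reflect each half through the midpoint of $PQ$ to get two centrally symmetric $\lambda$-convex polygons, and combine the symmetric case with the strict convexity of the lune's area-as-a-function-of-length (Lemma~\ref{lemluneest}). In the centrally symmetric case it applies Steiner's four-bar linkage to a symmetric pair of vertices $A,\bar A$ and $B,\bar B$: the central symmetry is what guarantees that opening the hinge at $A$ keeps the length fixed, keeps the curve $\lambda$-convex (the angles at $B$, $\bar B$ can only decrease), and, via the monotonicity of the spherical triangle area in the apex angle (Lemma~\ref{lemarea}), strictly decreases the enclosed area while removing a pair of vertices. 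Finally, instead of induction, existence of a minimizer among centrally symmetric polygons of fixed length is obtained from Blaschke's selection theorem, and the deformation shows the minimizer cannot have more than one pair of vertices; this also disposes of the infinitely-many-vertices case. If you want to salvage your outline, the symmetrization step is the missing idea that makes the vertex-removing deformation actually constructible.
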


\begin{proof}

We will divide the proof into two steps: (I) when $\gamma$ is a centrally symmetric curve; (II) the general case. 

I. Suppose that $\gamma$ is a centrally symmetric $\lambda$-convex polygon. If at any point on $\gamma$ there exist a unique tangent geodesic, then $\gamma$ is a circle of constant curvature equal to $\lambda$. This is possible only when $L(\gamma) = {2\pi}/{\sqrt{k_1^2 + \lambda^2}}$. If $L(\gamma) < {2\pi}/{\sqrt{k_1^2 + \lambda^2}}$,  then the curve has at least one pair of centrally symmetric vertexes, that is points at which left and right semi-tangents do not coincide. If we have precisely one pair, then $\gamma$ is a lune.

Suppose that $\gamma$ has two distinct pairs of centrally symmetric vertexes $A$ and $\bar A$, $B$ and $\bar B$ (see Fig.~\ref{pic1} (a)).  

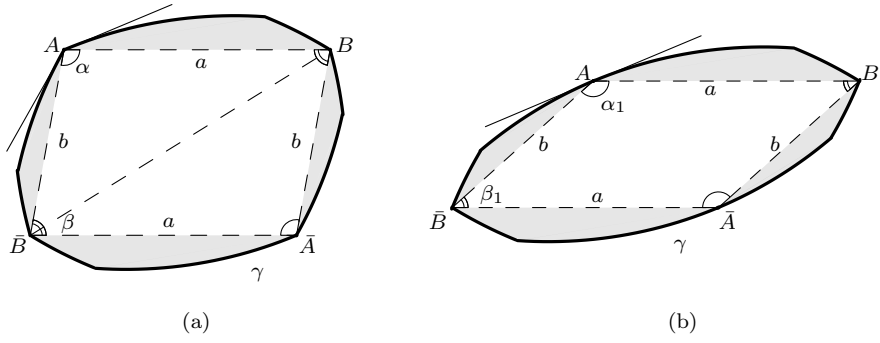
\begin{figure}[h]

\begin{center}

\begin{minipage}[H]{0.49\linewidth}
\begin{center}
\begin{tikzpicture}[line cap=round,line join=round,>=triangle 45,x=1.0cm,y=1.0cm,scale=0.25]
\clip(-5.77,-12.87) rectangle (13.24,2.78);
\fill[fill=black,fill opacity=0.1] (-2.38,0) -- (8.17,1.76) -- (11.62,0) -- cycle;
\fill[fill=black,fill opacity=0.1] (9.85,-9.84) -- (-0.71,-11.6) -- (-4.15,-9.84) -- cycle;
\fill[fill=black,fill opacity=0.1] (-4.82,-6.51) -- (-2.38,0) -- (-4.15,-9.84) -- cycle;
\fill[fill=black,fill opacity=0.1] (12.29,-3.33) -- (9.85,-9.84) -- (11.62,0) -- cycle;
\draw [shift={(-2.38,0)}] (0,0) -- (-100.22:0.83) arc (-100.22:0:0.83) -- cycle;
\draw [shift={(9.85,-9.84)}] (0,0) -- (79.78:0.83) arc (79.78:180:0.83) -- cycle;
\draw [shift={(11.62,0)}] (0,0) -- (180:0.83) arc (180:259.78:0.83) -- cycle;
\draw [shift={(-4.15,-9.84)}] (0,0) -- (0:0.83) arc (0:79.78:0.83) -- cycle;
\draw [dash pattern=on 6pt off 6pt] (-2.38,0)-- (11.62,0);
\draw [dash pattern=on 6pt off 6pt] (-2.38,0)-- (-4.15,-9.84);
\draw [dash pattern=on 6pt off 6pt] (-4.15,-9.84)-- (9.85,-9.84);
\draw [dash pattern=on 6pt off 6pt] (11.62,0)-- (9.85,-9.84);
\draw [shift={(6.57,-21.19)},line width=1.2pt,fill=black,fill opacity=0.1]  plot[domain=1.5:1.97,variable=\t]({1*23*cos(\t r)+0*23*sin(\t r)},{0*23*cos(\t r)+1*23*sin(\t r)});
\draw [shift={(-0.51,-19.54)},line width=1.2pt,fill=black,fill opacity=0.1]  plot[domain=1.02:1.18,variable=\t]({1*23*cos(\t r)+0*23*sin(\t r)},{0*23*cos(\t r)+1*23*sin(\t r)});
\draw [shift={(0.9,11.35)},line width=1.2pt,fill=black,fill opacity=0.1]  plot[domain=1.5:1.97,variable=\t]({-1*23*cos(\t r)+0*23*sin(\t r)},{0*23*cos(\t r)+-1*23*sin(\t r)});
\draw [shift={(7.97,9.7)},line width=1.2pt,fill=black,fill opacity=0.1]  plot[domain=1.02:1.18,variable=\t]({-1*23*cos(\t r)+0*23*sin(\t r)},{0*23*cos(\t r)+-1*23*sin(\t r)});
\draw(20.73,4.11) -- (24.89,4.11);
\draw(21.68,1.65) -- (25.84,1.65);
\draw [shift={(18,-3.68)},line width=1.2pt,fill=black,fill opacity=0.1]  plot[domain=3.26:3.41,variable=\t]({1*23*cos(\t r)+0*23*sin(\t r)},{0*23*cos(\t r)+1*23*sin(\t r)});
\draw [shift={(17.69,-11.24)},line width=1.2pt,fill=black,fill opacity=0.1]  plot[domain=2.63:2.93,variable=\t]({1*23*cos(\t r)+0*23*sin(\t r)},{0*23*cos(\t r)+1*23*sin(\t r)});
\draw [shift={(-10.22,1.4)},line width=1.2pt,fill=black,fill opacity=0.1]  plot[domain=2.63:2.93,variable=\t]({-1*23*cos(\t r)+0*23*sin(\t r)},{0*23*cos(\t r)+-1*23*sin(\t r)});
\draw [shift={(-10.54,-6.16)},line width=1.2pt,fill=black,fill opacity=0.1]  plot[domain=3.26:3.41,variable=\t]({-1*23*cos(\t r)+0*23*sin(\t r)},{0*23*cos(\t r)+-1*23*sin(\t r)});
\draw (-2.38,0)-- (3.3,2.4);
\draw (-2.38,0)-- (-5.39,-5.38);
\draw [shift={(11.62,0)}] (180:0.83) arc (180:259.78:0.83);
\draw [shift={(11.62,0)}] (180:0.62) arc (180:259.78:0.62);
\draw [shift={(-4.15,-9.84)}] (0:0.83) arc (0:79.78:0.83);
\draw [shift={(-4.15,-9.84)}] (0:0.62) arc (0:79.78:0.62);
\draw (-3.9,1.06) node[anchor=north west] {$A$};
\draw (11.5,1.06) node[anchor=north west] {$B$};
\draw (-5.7,-9.45) node[anchor=north west] {${\bar{B}}$};
\draw (9.55,-9.45) node[anchor=north west] {$\bar{A}$};
\draw (4.1,-0.05) node[anchor=north west] {${a}$};
\draw (2.4,-8.5) node[anchor=north west] {${a}$};
\draw (-3.1,-4) node[anchor=north west] {${b}$};
\draw (9.14,-4) node[anchor=north west] {${b}$};
\draw (-2.2,-0.3) node[anchor=north west] {$\alpha$};
\draw (-2.95,-8.32) node[anchor=north west] {$\beta$};
\draw (7.04,-11.12) node[anchor=north west] {$\gamma$};
\fill [color=black] (-2.38,0) circle (3.0pt);
\fill [color=black] (-4.15,-9.84) circle (3.0pt);
\fill [color=black] (9.85,-9.84) circle (3.0pt);
\fill [color=black] (11.62,0) circle (3.0pt);
\draw [dash pattern=on 6pt off 6pt] (-4.15,-9.84) -- (11.62,0);
\end{tikzpicture}
\end{center}
\end{minipage}
\hfill 
\begin{minipage}[h]{0.49\linewidth}
\begin{center}

\begin{tikzpicture}[line cap=round,line join=round,>=triangle 45,x=1.0cm,y=1.0cm,scale=0.25]
\clip(-11.17,-9.75) rectangle (12.87,2.86);
\fill[fill=black,fill opacity=0.1] (-2.38,0) -- (8.17,1.76) -- (11.62,0) -- cycle;
\fill[fill=black,fill opacity=0.1] (4.21,-6.71) -- (-6.35,-8.47) -- (-9.79,-6.71) -- cycle;
\fill[fill=black,fill opacity=0.1] (-8.29,-3.66) -- (-2.38,0) -- (-9.79,-6.71) -- cycle;
\fill[fill=black,fill opacity=0.1] (10.11,-3.05) -- (4.21,-6.71) -- (11.62,0) -- cycle;
\draw [shift={(-2.38,0)}] (0,0) -- (-137.85:0.83) arc (-137.85:0:0.83) -- cycle;
\draw [shift={(4.21,-6.71)}] (0,0) -- (42.15:0.83) arc (42.15:180:0.83) -- cycle;
\draw [shift={(11.62,0)}] (0,0) -- (180:0.83) arc (180:222.15:0.83) -- cycle;
\draw [shift={(-9.79,-6.71)}] (0,0) -- (0:0.83) arc (0:42.15:0.83) -- cycle;
\draw [dash pattern=on 5pt off 5pt] (-2.38,0)-- (11.62,0);
\draw [dash pattern=on 5pt off 5pt] (-2.38,0)-- (-9.79,-6.71);
\draw [dash pattern=on 5pt off 5pt] (-9.79,-6.71)-- (4.21,-6.71);
\draw [dash pattern=on 5pt off 5pt] (11.62,0)-- (4.21,-6.71);
\draw [shift={(6.57,-21.19)},line width=1.2pt,fill=black,fill opacity=0.1]  plot[domain=1.5:1.97,variable=\t]({1*23*cos(\t r)+0*23*sin(\t r)},{0*23*cos(\t r)+1*23*sin(\t r)});
\draw [shift={(-0.51,-19.54)},line width=1.2pt,fill=black,fill opacity=0.1]  plot[domain=1.02:1.18,variable=\t]({1*23*cos(\t r)+0*23*sin(\t r)},{0*23*cos(\t r)+1*23*sin(\t r)});
\draw [shift={(-4.74,14.48)},line width=1.2pt,fill=black,fill opacity=0.1]  plot[domain=1.5:1.97,variable=\t]({-1*23*cos(\t r)+0*23*sin(\t r)},{0*23*cos(\t r)+-1*23*sin(\t r)});
\draw [shift={(2.33,12.83)},line width=1.2pt,fill=black,fill opacity=0.1]  plot[domain=1.02:1.18,variable=\t]({-1*23*cos(\t r)+0*23*sin(\t r)},{0*23*cos(\t r)+-1*23*sin(\t r)});
\draw(20.73,4.11) -- (24.89,4.11);
\draw(21.68,1.65) -- (25.84,1.65);
\draw [shift={(11.52,-15.36)},line width=1.2pt,fill=black,fill opacity=0.1]  plot[domain=2.61:2.76,variable=\t]({1*23*cos(\t r)+0*23*sin(\t r)},{0*23*cos(\t r)+1*23*sin(\t r)});
\draw [shift={(6.65,-21.15)},line width=1.2pt,fill=black,fill opacity=0.1]  plot[domain=1.97:2.28,variable=\t]({1*23*cos(\t r)+0*23*sin(\t r)},{0*23*cos(\t r)+1*23*sin(\t r)});
\draw [shift={(-4.83,14.44)},line width=1.2pt,fill=black,fill opacity=0.1]  plot[domain=1.97:2.28,variable=\t]({-1*23*cos(\t r)+0*23*sin(\t r)},{0*23*cos(\t r)+-1*23*sin(\t r)});
\draw [shift={(-9.69,8.65)},line width=1.2pt,fill=black,fill opacity=0.1]  plot[domain=2.61:2.76,variable=\t]({-1*23*cos(\t r)+0*23*sin(\t r)},{0*23*cos(\t r)+-1*23*sin(\t r)});
\draw (-2.38,0)-- (3.3,2.4);
\draw (-2.38,0)-- (-8.05,-2.42);
\draw [shift={(11.62,0)}] (180:0.83) arc (180:222.15:0.83);
\draw [shift={(11.62,0)}] (180:0.62) arc (180:222.15:0.62);
\draw [shift={(-9.79,-6.71)}] (0:0.83) arc (0:42.15:0.83);
\draw [shift={(-9.79,-6.71)}] (0:0.62) arc (0:42.15:0.62);
\draw (-3.8,1.25) node[anchor=north west] {$A$};
\draw (11.3,1.25) node[anchor=north west] {${B}$};
\draw (-11.45,-6.5) node[anchor=north west] {${\bar{B}}$};
\draw (3.8,-6.5) node[anchor=north west] {$\bar{A}$};
\draw (3.05,0.15) node[anchor=north west] {${a}$};
\draw (-2.9,-5.5) node[anchor=north west] {${a}$};
\draw (-5.68,-2.5) node[anchor=north west] {${b}$};
\draw (6.5,-2.5) node[anchor=north west] {${b}$};
\draw (-2.4,-0.6) node[anchor=north west] {$\alpha_1$};
\draw (-8.75,-5.1) node[anchor=north west] {$\beta_1$};
\draw (1.43,-8) node[anchor=north west] {$\gamma$};
\fill [color=black] (-2.38,0) circle (3.0pt);
\fill [color=black] (-9.79,-6.71) circle (3.0pt);
\fill [color=black] (4.21,-6.71) circle (3.0pt);
\fill [color=black] (11.62,0) circle (3.0pt);
\end{tikzpicture}

\end{center}
\end{minipage}
\begin{minipage}[h]{1\linewidth}
\begin{tabular}{p{0.49\linewidth}p{0.49\linewidth}}
\vskip 0.05mm \centering (a) & \vskip 0.05mm \centering (b)
\end{tabular}
\end{minipage}

\caption{Centrally-symmetric $\lambda$-convex polygon $\gamma$ before (a) and after (b) the four-bar linkage isoperimetric deformation around the vertex $A$.}
\label{pic1}
\end{center}
\end{figure}

Let us consider the geodesic quadrilateral $AB\bar A\bar B$. Since $\gamma$ is centrally symmetric, the opposite sides of this quadrilateral are equal. Let us introduce the notations: $a = \left|AB\right| = \left|\bar A\bar B\right|$, $b = \left|A\bar B\right|=\left|\bar A B\right|$, $\alpha = \angle \bar B A B = \angle \bar B \bar A B$, $\beta = \angle A B \bar A = \angle A \bar B \bar A$. In the spherical geometry we have a fact, which is a complete analog of the elementary fact from the Euclidean geometry. 

 \begin{lemma}
\label{lemarea}

If $f(\alpha)$ is the area of a spherical triangle $BA\bar B$ on $\mathbb S^2(k_1^2)$ with $|BA| = a$, $|A\bar B| = b$ and with $\angle BA \bar B = \alpha$, then the function $f(\alpha)$ on the interval $[0,\pi]$ has a unique maximum point $\alpha_0$, and $f$ is strictly monotonous on $[0,\alpha_0)$ and $(\alpha_0, \pi]$ (see Fig.~\ref{pic1} (a)).
\end{lemma} 

\begin{proof}

Without loss of generality we may assume that $k_1=1$.

In the spherical case we can get the formula for the area of a triangle completely similar to the one in the Lobachevsky geometry (see~\cite[formula 2.9]{Wal}, and also~\cite{Bil}). In particular, if $g(\alpha) = \tan \frac{f(\alpha)}{2}$, then
\begin{equation*}
\label{lemareaeq1}
g(\alpha) = \frac{\sin a \sin b \sin \alpha}{(1 + \cos a)(1 + \cos b) + \sin a \sin b \cos \alpha}.
\end{equation*}

We compute:
$$
g'(\alpha) = \frac{\sin a \sin b \left( \cos \alpha (1 + \cos a)(1 + \cos b) + \sin a \sin b \right)}{\left((1 + \cos a)(1 + \cos b) + \sin a \sin c \cos \alpha \right)^2}.
$$

It is easy to see that on the interval $[0,\pi]$ the derivative $g'$ is equal zero if and only if $\cos(\alpha) = - \tan \frac{a}{2} \tan \frac{b}{2}$. Because the cosine is monotonous on $[0, \pi]$, the last equation has only one solution $\alpha_0$. Since $g(0)=g(\pi) = 0$, and $g \geqslant 0$, we obtain that $\alpha_0$ is a maximum point of the function $g(\alpha)$, and thus of the function $f(\alpha)$. The trivial observation that $f$ is  strictly monotonous on $[0,\alpha_0)$ and $(\alpha_0, \pi]$ finishes the proof of the lemma.
\qed
\end{proof}

From Lemma~\ref{lemarea} it follows that either $\alpha = \beta = \alpha_0$, or $\left(\alpha - \alpha_0\right)\left(\beta - \alpha_0\right) < 0$. In both cases there is an angle ($\alpha$ or $\beta$) whose increase leads to decrease of the area of the quadrilateral $AB\bar A \bar B$. Without loss of generality we can assume that this angle is $\alpha$. Note that increase of $\alpha$ implies decrease of $\beta$.

Now we introduce an isoperimetric deformation of the initial curve that preserves its $\lambda$-convexity, doesn't change the length, decreases the number of pairs of vertexes, and decreases the bounded area. For this we will use the idea of the four-bar linkage method due to Steiner (see~\cite{Bla}, and also~\cite{PR}).

Let us fix the arcs of the curve $\gamma$ between the points $A$, $B$, $\bar A$, and $\bar B$ and assume that at these points we have hinges. Since the four-bar linkage $AB\bar A \bar B$ is not rigid, the whole construction gains mechanical flexibility. Now, let us  increase the angle $ B A \bar B$ by rotating the links $AB$ and $A\bar B$ around the hinge $A$ until the angle between semi-tangents to $\gamma$ at $A$ will become equal to $\pi$ (see Fig.~\ref{pic1} (a, b)). Recalling that with such a move the area of the quadrilateral $AB\bar A \bar B$ decreases, and since we have fixed the arcs of $\gamma$, we obtain that the whole area bounded by $\gamma$ will decrease. At the same time the length of $\gamma$ will remain unchanged. Finally, since $\gamma$ is centrally symmetric, and, besides, the angle at the vertexes $B$ and $\bar B$ of $\gamma$ can only decrease, the deformed curve will be still $\lambda$-convex and centrally symmetric. 

Therefore, we have found an isoperimetric deformation of our $\lambda$-convex polygon into another $\lambda$-convex polygon such that this deformation decreases the bounded area and reduces the number of vertexes by $2$. Because by Blaschke's selection theorem there exists a centrally symmetric $\lambda$-convex polygon that bounds the least area among all such polygons with the same length, we get that it can be only a $\lambda$-convex lune. The case I of Proposition~\ref{mainthspecialcase} is proved.

II. We are ready to prove the general case. Let $PQ$ be a diameter of $\gamma$, that is a longest geodesic segment joining a pair of points on the curve. And let $\gamma_+$ and $\gamma_-$ be the arcs into which $P$ and $Q$ divide $\gamma$. We will denote $L_{+} = L(\gamma_{+})$ and $L_- = L(\gamma_{-})$. With such notations we have 
\begin{equation}
\label{lemspecialcaseeq2}
L(\gamma) = L_+ + L_-.
\end{equation}

Using $\gamma_+$ and $\gamma_-$ let us construct $\lambda$-convex polygons $\gamma_1$ and $\gamma_2$ by reflecting, respectively, $\gamma_+$ and $\gamma_-$ symmetrically with respect to the midpoint of the diameter $PQ$. From the first variation formula it follows that at $P$ and $Q$ there exist supporting for $\gamma$ geodesics such that the geodesic $PQ$ is orthogonal to them. This fact guaranties that the curves $\gamma_i$ are indeed $\lambda$-convex polygons. Observe that 
\begin{equation}
\label{lemspecialcaseeq3}
L(\gamma_1) = 2 L_+,\, L(\gamma_2) = 2 L_-.
\end{equation}

Let $\tilde \gamma_i$ be $\lambda$-convex lunes such that $L(\gamma_i) = L(\tilde \gamma_i)$ for $i \in \{1,2\}$. Hence for $\tilde \gamma_i$ and $\gamma_i$ we can apply the case I to obtain  
\begin{equation}
\label{lemspecialcaseeq1}
A(\gamma_i) \geqslant A(\tilde\gamma_i), \, i \in \{1,2\}.
\end{equation}

Let $\tilde A$ be an area bounded by a lune $\tilde \gamma$ considered as a function of its length $L(\tilde \gamma)$, that is $\tilde A \left(L (\tilde \gamma)\right) = A(\tilde \gamma)$. To proceed we have to know explicitly the function $\tilde A$ and its convexity properties. These facts are summarized in the following lemma, whose proof consists of a straightforward computation and thus omitted. 

\begin{lemma}
\label{lemluneest}
 If $\tilde \gamma \subset \mathbb S^2(k_1^2)$ is a $\lambda$-convex lune of length $L$, then
\begin{equation}
\label{lemluneesteq1}
\tilde A (L) = \frac{4}{k_1^2} \arctan\left(\frac{\lambda}{\sqrt{\lambda^2+k_1^2}}\tan\left(\frac{\sqrt{\lambda^2+k_1^2}}{4} L\right)\right)-\frac{\lambda}{k_1^2} L.
\end{equation}
Moreover, $\tilde A$ is a strictly convex function on the whole its domain of definition. 
\end{lemma}

Using strict convexity of $\tilde A$ from Lemma~\ref{lemluneest} and taking into consideration equalities~(\ref{lemspecialcaseeq2}),~(\ref{lemspecialcaseeq3}) and inequalities~(\ref{lemspecialcaseeq1}), we estimate:
\begin{equation*}
\begin{aligned}
A(\gamma) &= \frac{1}{2}\left(A(\gamma_1) + A(\gamma_2)\right) \geqslant \frac{1}{2}\left(A(\tilde \gamma_1) + A(\tilde \gamma_2)\right) \\ 
&= \frac{1}{2}\left(\tilde A(2 L_+) + \tilde A(2 L_-)\right) \geqslant \tilde A\left(\frac{2 L_+ +2 L_-}{2}\right) = \tilde A (L(\gamma)) = A(\tilde \gamma),
\end{aligned}
\end{equation*}
where $\tilde \gamma$ is the $\lambda$-convex lune of the same length as $\gamma$. 

To finish the proof of Proposition~\ref{mainthspecialcase} we need to consider the equality case. Note that in the chain of inequalities shown above the equality case, in a view of strict convexity of $\tilde A$ and since the case I, is possible only if $\gamma_1$ and $\gamma_2$ are congruent lunes. But this implies that the polygon $\gamma$ is a $\lambda$-convex lune too, which is equivalent to congruence of $\gamma$ and $\tilde \gamma$. Proposition~\ref{mainthspecialcase} is proved.  
\qed
\end{proof}

With the help of Proposition~\ref{mainthspecialcase} we are now ready to accomplish the proof of Theorem~\ref{mainthalt}. Indeed, by Proposition~\ref{mainthspecialcase} the only solution of our isoperimetric minimization problem for the bounded area in the class of $1$-convex polygons will be a $1$-convex lune. Hence, by Pontryagin's Maximum Principle it will also be the only solution of general problem~(\ref{optcontrsys}). Theorem~\ref{mainthalt} is proved. Theorem~\ref{mainth} directly follows from Theorem~\ref{mainthalt} and Lemma~\ref{lemluneest}.

\section{Dual results}

In this section with the help of polar maps we will obtain the dual results to the main results of our paper.

Let $\gamma \subset \mathbb S^2(k_1^2)$ be a $C^{1,1}$-smooth curve on a two-dimensional sphere. Consider some fixed vector field $X$ parallel along $\gamma$. Since a parallel field is a solution of a system of first order ODEs, smoothness of our curve will suffice to construct the field $X$. Let $\varphi(s)$ be the angle between $X$ and the tangent to $\gamma$ taken at the point corresponding to a arc-length parameter $s$ of $\gamma$. We will call \textit{upper geodesic curvature} $\overline{\kappa_g}$ and \textit{lower geodesic curvature} $\underline{\kappa_g}$ of $\gamma$ at a point corresponding to the arc length parameter $s_0$ the quantities
$$
\overline{\kappa_g} (s_0) = \limsup \limits_{s \rightarrow s_0} \frac{\varphi(s) - \varphi(s_0)}{s-s_0}, \,\,
\underline{\kappa_g} (s_0) = \liminf \limits_{s \rightarrow s_0} \frac{\varphi(s) - \varphi(s_0)}{s-s_0}.
$$  

Since $\gamma$ is a $C^{1,1}$-smooth curve, both its upper and lower geodesic curvatures are well defined, particularly, they are finite and don't depend on the choice of the vector field $X$ (see~\cite[p.~252]{doC}). If $\gamma$ is $C^m$-smooth with $m \geqslant 2$ at some point, then at this point $\overline{\kappa_g} = \underline{\kappa_g}=\kappa_g$.

Observe that if lower curvature of a curve satisfies the inequality $\underline{\kappa_g} \geqslant 0$, then this curve is locally convex. One more crucial observation we should make here is that if $\lambda \geqslant \overline{\kappa_g} \geqslant \underline{\kappa_g}\geqslant 0$ at each point on $\gamma$, then through any point $P$ on the curve passes a tangent to $\gamma$ circle of geodesic curvature equal to $\lambda$ such that in some neighborhood of $P$ this circle lies from the convex side of $\gamma$. The converse is also true, namely, an existence of a circle with curvature equal to $\lambda$ that touches a convex curve from its convex side (locally) at any point implies a boundedness of the curve's upper curvature. Furthermore, from the proof of Proposition~\ref{propsupfunc} it follows that convex curves with $\lambda \geqslant \overline{\kappa_g} \geqslant \underline{\kappa_g}\geqslant 0$ on a sphere $\mathbb S^2(k_1^2)$ are just polar images of $k$-convex curves with $k=k_1^2/\lambda$. Note that similarly to the above it is possible to define upper and lower curvatures for general convex curves. In such terminology $k$-convex curves on a sphere are precisely those with $\underline {\kappa_g} \geqslant k$.

In order to state the main result of this section, we need the following definition. A \textit{racetrack curve corresponding $\lambda$} with $\lambda > 0$ is a boundary of the convex hull for two equal circles of curvature equal to $\lambda$. Such curves appear as solutions of the maximization problem for the circumscribed radius of curve, provided that its length is fixed and geodesic curvature satisfies the inequality $1 \geqslant \underline{\kappa_g} \geqslant \overline{\kappa_g} \geqslant -1$ (see~\cite{H,AB} for the two-dimensional case, and~\cite{M} for curves in $\mathbb E^n$). 

Note that in the spherical case a racetrack curve corresponding $\lambda$ is just the polar image of a $1/\lambda$-convex lune.

Now we are ready to formulate the result dual to Theorem~\ref{mainth}.   

\begin{theorem}
\label{dualth}
Let $\gamma \subset \mathbb S^2(k_1^2)$ be a closed embedded curve whose upper and lower geodesic curvatures satisfy the inequality $0 \leqslant \underline{\kappa_g} \leqslant \overline{\kappa_g} \leqslant \lambda$ with some constant $\lambda \geqslant 0$. If $\tilde\gamma \subset \mathbb S^2(k_1^2)$ is a racetrack curve corresponding $\lambda$ such that
$$
A(\gamma) = A(\tilde\gamma),
$$
then
$$
L(\gamma) \leqslant L(\tilde\gamma).
$$
Moreover, equality holds if and only if $\gamma$ and $\tilde \gamma$ are congruent.
\end{theorem}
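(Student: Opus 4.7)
The natural plan is to reduce Theorem~\ref{dualth} to Theorem~\ref{mainthalt} by passing to polar images. By the discussion preceding the theorem, a closed embedded convex curve on $\mathbb S^2(k_1^2)$ with $0 \leqslant \underline{\kappa_g} \leqslant \overline{\kappa_g} \leqslant \lambda$ is precisely the polar image of a $\mu$-convex curve with $\mu = k_1^2/\lambda$, and the racetrack $\tilde\gamma$ corresponding to $\lambda$ is by definition the polar image of a $\mu$-convex lune $\tilde\gamma^{\ast}$. Thus my first step is to let $\gamma^{\ast}$ and $\tilde\gamma^{\ast}$ denote the polar images of $\gamma$ and $\tilde\gamma$. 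By involutivity $(\gamma^{\ast})^{\ast} = \gamma$ (Section~\ref{mainres}) and bijectivity of the polar map on supporting geodesics of a strictly convex curve, $\gamma^{\ast}$ is a closed embedded $\mu$-convex curve on the same sphere, and $\tilde\gamma^{\ast}$ is a $\mu$-convex lune.

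The second step is to establish the length--area duality identities between a convex curve and its polar image. Combining the relations $ds = \kappa_g^{\ast}\, ds^{\ast}$ and $\kappa_g \cdot \kappa_g^{\ast} = k_1^2$ from the proof of Proposition~\ref{propsupfunc} with the Gauss--Bonnet formula, and noting that a vertex of $\gamma^{\ast}$ with jump angle $\varphi^{\ast}$ corresponds under the inverse polar map to a geodesic arc of $\gamma$ of length $\varphi^{\ast}/k_1$ (so that the jump-angle contributions to $L(\gamma)$ precisely cancel those appearing in Gauss--Bonnet applied to $\gamma^{\ast}$), one derives
\begin{equation}
\label{dualitypair}
k_1 L(\gamma) + k_1^2 A(\gamma^{\ast}) = 2\pi, \qquad k_1 L(\gamma^{\ast}) + k_1^2 A(\gamma) = 2\pi.
\end{equation}
The same identities hold for the pair $\tilde\gamma,\,\tilde\gamma^{\ast}$.

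The third step is the reduction itself. The hypothesis $A(\gamma) = A(\tilde\gamma)$, combined with the second identity in~\eqref{dualitypair} applied to both curves, yields $L(\gamma^{\ast}) = L(\tilde\gamma^{\ast})$. Since $\gamma^{\ast}$ is a closed embedded $\mu$-convex curve and $\tilde\gamma^{\ast}$ is a $\mu$-convex lune of equal length, Theorem~\ref{mainthalt} gives $A(\gamma^{\ast}) \geqslant A(\tilde\gamma^{\ast})$, with equality only when $\gamma^{\ast}$ is itself a $\mu$-convex lune. Applying the first identity in~\eqref{dualitypair} to both sides converts this into $L(\gamma) \leqslant L(\tilde\gamma)$. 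The equality clause of Theorem~\ref{mainthalt} forces $\gamma^{\ast}$, hence $\gamma = (\gamma^{\ast})^{\ast}$, to be congruent to the racetrack $\tilde\gamma$.

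The main obstacle I expect is the careful verification of~\eqref{dualitypair} on the general sphere $\mathbb S^2(k_1^2)$, rather than on the unit sphere where the proof of Proposition~\ref{propsupfunc} is carried out: one must keep precise track of how vertices and geodesic arcs are exchanged under the polar map and check that the jump-angle contributions from Gauss--Bonnet cancel against the geodesic-arc contributions to the dual length, modulo the $1/k_1$ factor. Once~\eqref{dualitypair} is in place, the remainder of the argument is a mechanical application of Theorem~\ref{mainthalt}.
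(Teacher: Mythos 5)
Your proposal is correct and follows essentially the same route as the paper: pass to polar images, use the Gauss--Bonnet duality identities $k_1 L(\gamma^{\ast}) = 2\pi - k_1^2 A(\gamma)$ and $k_1^2 A(\gamma^{\ast}) = 2\pi - k_1 L(\gamma)$ to translate the hypothesis $A(\gamma)=A(\tilde\gamma)$ into equality of lengths of the $k_1^2/\lambda$-convex polar curves, and then apply Theorem~\ref{mainthalt}. The only difference is that you flag and sketch the verification of the duality identities (which the paper asserts as an easy consequence of the computations in Proposition~\ref{propsupfunc}), but this is added detail on the same argument rather than a different approach.
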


The stated theorem is equivalent to the following theorem.

\begin{theorem}
\label{dualthalt}

If $\gamma \subset \mathbb S^2 (k_1^2)$ is a closed embedded curve whose upper and lower geodesic curvatures satisfy the inequality $0 \leqslant \underline{\kappa_g} \leqslant \overline{\kappa_g} \leqslant \lambda$, then
\begin{equation}
\label{dualthsphcase}
\begin{aligned}
L(\gamma) &\leqslant \frac{2\pi}{k_1} + \frac{1}{\lambda} \left[2 \pi - k_1^2 A(\gamma)\right] - \\
&- \frac{4}{k_1} \arctan \left(\frac{k_1}{\sqrt{\lambda^2 + k_1^2}}\tan\left(\left[2\pi - k_1^2 A(\gamma)\right]\frac{\sqrt{\lambda^2 + k_1^2}}{4\lambda}\right)\right).
\end{aligned}
\end{equation}
Moreover, equality holds if and only if $\gamma$ is a racetrack curve corresponding $\lambda$.
\end{theorem}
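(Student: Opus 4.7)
The plan is to reduce Theorem~\ref{dualthalt} to the already proven Theorem~\ref{mainth} via spherical polar duality. As pointed out in the discussion preceding Theorem~\ref{dualth}, a closed embedded curve $\gamma \subset \mathbb S^2(k_1^2)$ with $0 \leqslant \underline{\kappa_g} \leqslant \overline{\kappa_g} \leqslant \lambda$ is precisely the polar image of a closed embedded $k$-convex curve $\gamma^* \subset \mathbb S^2(k_1^2)$ with $k = k_1^2/\lambda$. The strategy is therefore to apply Theorem~\ref{mainth} to $\gamma^*$ (with curvature bound $k$) and then translate the resulting area-length inequality for $\gamma^*$ back into a length-area inequality for $\gamma$.

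The main technical ingredient is the following pair of spherical duality identities:
\begin{equation}
\label{dualids}
L(\gamma^*) = \frac{2\pi}{k_1} - k_1 A(\gamma), \qquad A(\gamma^*) = \frac{2\pi}{k_1^2} - \frac{L(\gamma)}{k_1}.
\end{equation}
Both follow from the $\mathbb S^2(k_1^2)$-version of the arclength/curvature relation $ds^* = \kappa_g\, ds/k_1$ (i.e.~the rescaling to radius $1/k_1$ of equations~(\ref{polarcurvprop}) and~(\ref{secondterm})) combined with the Gauss--Bonnet theorem. For the first identity, since $\gamma$ is $C^{1,1}$-smooth, Gauss--Bonnet gives $\int_\gamma \kappa_g\, ds = 2\pi - k_1^2 A(\gamma)$, and integrating $ds^*$ along $\gamma^*$ yields $L(\gamma^*) = (1/k_1)\int_\gamma \kappa_g\,ds$. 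The second identity is the analogous statement for $\gamma = \gamma^{**}$; the delicate point is bookkeeping at the vertices of $\gamma^*$, which correspond to geodesic subarcs of $\gamma$ where $\kappa_g=0$. Specifically, a jump angle $\varphi_i^*$ at a vertex of $\gamma^*$ corresponds to a geodesic subarc of $\gamma$ of length $\varphi_i^*/k_1$, and the Gauss--Bonnet formula for curves with corners $\int_{\gamma^*}\kappa_g^*\, ds^* + \sum_i \varphi_i^* = 2\pi - k_1^2 A(\gamma^*)$ then gives the stated expression for $A(\gamma^*)$. This vertex accounting will be the most delicate step.

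Once~(\ref{dualids}) is established, one applies Theorem~\ref{mainth} to the $k$-convex curve $\gamma^*$ with $k = k_1^2/\lambda$:
$$
A(\gamma^*) \geqslant \frac{4}{k_1^2}\arctan\left(\frac{k}{\sqrt{k^2+k_1^2}}\tan\left(\frac{\sqrt{k^2+k_1^2}}{4} L(\gamma^*)\right)\right) - \frac{k}{k_1^2} L(\gamma^*).
$$
Using the elementary identities
$$
\sqrt{k^2+k_1^2} = \frac{k_1}{\lambda}\sqrt{\lambda^2+k_1^2}, \quad \frac{k}{\sqrt{k^2+k_1^2}} = \frac{k_1}{\sqrt{\lambda^2+k_1^2}}, \quad \frac{k}{k_1^2} = \frac{1}{\lambda},
$$
and substituting the expressions~(\ref{dualids}) for $A(\gamma^*)$ and $L(\gamma^*)$, a direct algebraic rearrangement that isolates $L(\gamma)$ produces precisely inequality~(\ref{dualthsphcase}).

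For the equality case, by Theorem~\ref{mainth} equality in the intermediate inequality holds if and only if $\gamma^*$ is a $k$-convex lune. Its polar image is, by the definition recalled before Theorem~\ref{dualth}, a racetrack curve corresponding to $\lambda$, which matches the equality characterization of Theorem~\ref{dualthalt}. The principal obstacle in the entire argument is the correct matching of vertices of $\gamma^*$ with geodesic sub-arcs of $\gamma$ in the Gauss--Bonnet bookkeeping that underlies~(\ref{dualids}); once those identities are in place, the remaining work is purely algebraic.
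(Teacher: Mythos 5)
Your proposal is correct and follows essentially the same route as the paper: the authors likewise obtain Theorem~\ref{dualthalt} by applying inequality~(\ref{maintheq1}) to the polar image $\gamma^\ast$, which is a $k_1^2/\lambda$-convex curve, and translating via the duality identities~(\ref{arealengthdual}) and~(\ref{arealengthdual2}). Your additional care with the Gauss--Bonnet vertex bookkeeping and the explicit algebraic verification simply fills in details the paper leaves as ``a straightforward computation.''
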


\begin{proof}[Proof of Theorem~\ref{dualth}]
Theorem~\ref{dualth} follows from Theorem~\ref{mainth} and Theorem~\ref{mainthalt}. Indeed, if $\gamma$ and $\gamma^\ast$ are two polar curves on a sphere, then from the Gauss~-- Bonnet formula using the computations from the proof of Proposition~\ref{propsupfunc} it easily follows that 
\begin{equation}
\label{arealengthdual}
k_1 L(\gamma^\ast) = 2 \pi - k_1^2 A(\gamma).
\end{equation}
And since $\left(\gamma^{\ast}\right)^\ast = \gamma$, from~(\ref{arealengthdual}) we also have
\begin{equation}
\label{arealengthdual2}
k_1^2 A(\gamma^\ast) = 2 \pi - k_1 L(\gamma).
\end{equation}

Let $\gamma^\ast$ and $\tilde\gamma^\ast$ be the polar images of the curves $\gamma$ and $\tilde \gamma$.  As we showed above, the curve $\gamma^\ast$ will be a closed embedded ${k_1^2}/{\lambda}$-convex curve, and $\tilde\gamma^\ast$ will be a ${k_1^2}/{\lambda}$-convex lune. Using (\ref{arealengthdual}), and the assertion of the theorem, we obtain  
$$
k_1 L(\gamma^\ast) = 2 \pi - k_1^2 A(\gamma) = 2 \pi - k_1^2 A(\tilde \gamma) = k_1 ^2 L(\tilde\gamma^\ast).
$$
Thus by Theorem~\ref{mainthalt}, 
$$
A(\gamma^\ast) \geqslant A(\tilde\gamma^\ast),
$$
from which using~(\ref{arealengthdual2}), the inequality of Theorem~\ref{dualth} follows directly. The equality case is proved automatically. Theorem~\ref{dualth} is proved.
\qed
\end{proof}

Equivalence of Theorems~\ref{dualth} and~\ref{dualthalt} is obtained by a straightforward computation of the length of a racetrack curve corresponding $\lambda$ assuming it bounds a fixed area. Even more, inequality~(\ref{dualthsphcase}) easily follows from inequality~(\ref{maintheq1}) with the help of formula~(\ref{arealengthdual}), and the discussed above polar correspondence between $C^{1,1}$-smooth curves with $0 \leqslant \underline{\kappa_g} \leqslant \overline{\kappa_g} \leqslant \lambda$ on $\mathbb S^2(k_1^2)$ and $k_1^2/\lambda$-convex curves on $\mathbb S^2(k_1^2)$.

\begin{remark}
We can also get the dual version of Theorem~\ref{ec2dth1}. Indeed, on the Euclidean plane we can define convex curves of bounded upper ($\overline{\kappa}$) and lower ($\underline{\kappa}$) curvature, and race-track curves in the same way as it was done in the spherical case. But since on the Euclidean plane a polar map depends on the choice of a polar center, and hence there is no absolute polarity as we have on a sphere, the dual result on the plane is not an immediate corollary of Theorem~\ref{ec2dth1}. However, by applying Pontryagin's Maximum Principle once more similarly to~\cite{BorDr1} with curvature of a curve as a control parameter, it is easy to get the desired result. More precisely, if $\gamma \subset \mathbb E^2$ is a closed embedded curve with $0 \leqslant \underline{\kappa} \leqslant \overline{\kappa} \leqslant \lambda$ ($\lambda > 0$), then
$$
L(\gamma) \leqslant \lambda A(\gamma) + \frac{\pi}{\lambda},
$$
and the equality holds only for a race-track curve corresponding $\lambda$.
\end{remark}


\begin{acknowledgements}
The authors would like to thank an anonymous referee for useful comments and suggestions that helped to improve the exposition.
\end{acknowledgements}




\end{document}